\documentclass{birkjour}
\usepackage{graphicx}
\usepackage{amsmath}
\usepackage{amsfonts}
\usepackage{amssymb}




\vfuzz2pt 
\hfuzz2pt 
\newtheorem{thm}{Theorem}[section]
\newtheorem{cor}[thm]{Corollary}
\newtheorem{lem}[thm]{Lemma}
\newtheorem{prop}[thm]{Proposition}
\theoremstyle{definition}
\newtheorem{defn}[thm]{Definition}
\newtheorem{rem}[thm]{Remark}

\theoremstyle{remark}

\numberwithin{equation}{section}
\newcommand{\norm}[1]{\left\Vert#1\right\Vert}
\newcommand{\abs}[1]{\left\vert#1\right\vert}

\newcommand{\eps}{\varepsilon}
\newcommand{\To}{\longrightarrow}

\newcommand{\hs}{\mathcal{H}}
\newcommand{\sh}{\mathcal{S}}

\newcommand{\Shq}{\mathcal{S}_q(M^2_{\Lambda},L^2(\mu))}

\def\<{\langle}
\def\>{\rangle}
\begin{document}

\title[]{Embeddings of M\"{u}ntz Spaces: Composition Operators}
\author{S.Waleed Noor}
\address{%
Abdus Salam School of Mathematical Sciences\\
New Muslim Town, Lahore, Pakistan}
\email{waleed\_math@hotmail.com}%

\thanks{}%
\begin{abstract}
Given a strictly increasing sequence $\Lambda=(\lambda_n)$ of nonegative real numbers,
with $\sum_{n=1}^\infty \frac{1}{\lambda_n}<\infty$, the M\"untz spaces $M_\Lambda^p$
are defined as the closure in $L^p([0,1])$ of the monomials $x^{\lambda_n}$. We discuss
how properties of the embedding $M_\Lambda^2\subset L^2(\mu)$, where $\mu$ is a finite
positive Borel measure on the interval $[0,1]$, have immediate consequences for composition
operators on $M^2_\Lambda$. We give criteria for composition operators to be bounded, compact,
or to belong to the Schatten--von Neumann ideals.
\end{abstract}
\subjclass{46E15, 46E20, 46E35}
\keywords{M\"untz space, embedding measure, lacunary sequence, Schatten--von Neumann classes, composition operators}
\maketitle

\section*{Introduction}

The M\"untz--Szasz Theorem states that,  if
$0=\lambda_0<\lambda_1<\dots<\lambda_n<\dots$ is an increasing sequence of
nonnegative real numbers, then the linear span of $x^{\lambda_n}$ is dense in
$C([0,1])$ if and only if $\sum_{n=1}^\infty \frac{1}{\lambda_n}=\infty$. When
$\sum_{n=1}^\infty \frac{1}{\lambda_n}<\infty$, the
closed linear span of the monomials $(x^{\lambda_n})_{n=0}^\infty$ in  $L^p([0,1])$ for $1\leq p<+\infty$
is a proper subspace of $L^p([0,1])$. These spaces, called M\"untz spaces
and denoted $M_\Lambda^p$, exhibit interesting properties that have not been
very much investigated. We refer principally to the monographies~\cite{Bor95,
Gur05}; recent results appear in~\cite{Alam08, Alam09, sp08, Dan10}.

In the paper \cite{Noor2011}, of which this work is a sequel, we investigated various properties and necessary conditions that allowed us to embed  the Hilbert M\"{u}ntz space $M^2_{\Lambda}$
into the Lebesgue space $L^2(\mu)$ for some positive measure $\mu$ on $[0,1]$. The boundedness, compactness and
Schatten ideal properties of this embedding were studied.

The purpose of this paper is to provide applications of the theory introduced in \cite{Noor2011} to composition operators on $M^2_\Lambda$. The plan of the paper is the following. After a section of preliminaries, we show in Section 2 that $M^p_\Lambda$ is not an invariant subspace of composition operators in general. It is then natural to study composition operators as mapping $M^p_\Lambda$ into $L^p([0,1])$. This is done in the sequel: sufficient conditions for composition operators to be bounded, compact or belong to Schatten ideals are obtained in Section 3, and necessary conditions in Section 4.

\section{Preliminaries}\label{se:prelim}
We denote by $m$ the Lebesgue measure on $[0,1]$. $L^p(\mu)$
shall be used to denote the space of Lebesgue integrable
functions of order $p\in[1,\infty]$ with respect to the measure $\mu$
on $[0,1]$. We will frequently use $L^p$ to mean $L^p(m)$, and
denote by $\norm{\cdot}_p$ and $||\cdot||_{L^p(\mu)}$ the norms
in $L^p(m)$ and $L^p(\mu)$ respectively.

Let us denote, for a set $S$ of nonnegative real numbers, the subspace
\[
L^p_S= \mathrm{closed \ span}\{x^t:t\in S\}\subset L^p.
\]
When clear from the context, we shall denote by $L_S$ the space $L^p_S$.
\begin{defn}Let $\Lambda$ be an increasing sequence of nonnegative real numbers with $\sum_{\lambda\in\Lambda}\frac{1}{\lambda}<\infty$. The M\"{u}ntz space $M^p_\Lambda$ is defined to be the space $L^p_\Lambda$.
\end{defn}
In this paper, $\Lambda$ shall always denote an increasing sequence of nonnegative real numbers with $\sum_{\lambda\in\Lambda}\frac{1}{\lambda}<\infty$. The functions in $M^p_\Lambda$ are continuous on $[0,1)$ and real analytic in (0,1). A feature of the M\"{u}ntz monomials $(x^{\lambda})_{\lambda\in\Lambda}$ is that they form a \emph{minimal system} in $M^p_\Lambda$, which means that for any $\lambda'\in\Lambda$
\[
\mathrm{dist}\,(x^{\lambda'},L_{\Lambda\backslash\{\lambda'\}})
=\inf_{g\in L_{\Lambda\backslash\{\lambda'\}}}||x^{\lambda'}-g||_{L^p}>0.
\]
This can easily be extended to show that if $\Lambda'\subset\Lambda$ is a finite subset, then
\begin{equation}\label{eq:minimal2}L_{\Lambda'}\cap L_{\Lambda\backslash\Lambda'}=\{0\}.
\end{equation}
The monograph \cite{Gur05} may be consulted for a discussion on the minimality of M\"{u}ntz monomials.

We shall need the Clarkson-Erd\"{o}s Theorem from \cite{Gur05}:
\begin{thm}\label{th:Cl-Erd}Assume that $\sum_k\frac{1}{\lambda_k}<\infty$ and $\inf_k(\lambda_{k+1}-\lambda_k)>0$. If $f\in M^p_\Lambda$ then there exist $b_k\in\mathbb{R}$ such that
\[
f(x)=\sum_{k=1}^\infty b_kx^{\lambda_k}\ \ \mathrm{for}\ x\in[0,1),
\]
where the series converges uniformly on compact subsets of $[0,1)$. Also, for any $\eps>0$, there is a constant $M>0$ such that
\begin{equation}\label{eq:Clark-Erdos}
|b_k|.||x^{\lambda_k}||_{L^p}\leq(1+\eps)^{\lambda_k}||f||_{L^p} \ \ \mathrm{if} \ k\geq M.
\end{equation}
\end{thm}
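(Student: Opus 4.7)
The plan is to split the argument into two independent pieces: (a) the coefficient estimate \eqref{eq:Clark-Erdos} for finite Müntz polynomials, with constants uniform in the polynomial, and (b) a limiting argument that extracts the coefficients of a general $f \in M^p_\Lambda$ and establishes uniform convergence on compacta.

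For (a) I would construct, for each index $k$, a bounded linear functional $\phi_k$ on $L^p_\Lambda$ that is biorthogonal to the monomial $x^{\lambda_k}$ and whose dual norm satisfies $\|\phi_k\| \leq (1+\eps)^{\lambda_k}/\|x^{\lambda_k}\|_{L^p}$ for $k$ large. The natural route is to pass to the logarithmic variable $s = -\log x$, mapping $L^p_\Lambda$ into a Hardy-type space of Dirichlet sums on $(0,\infty)$. The Müntz summability $\sum 1/\lambda_k < \infty$ ensures that a Blaschke-type product vanishing at $\{\lambda_j : j \neq k\}$ (of the form $\prod_{j\neq k}(\lambda-\lambda_j)/(\lambda+\lambda_j+1)$) converges, while the gap condition $\inf(\lambda_{k+1}-\lambda_k)>0$ gives this product enough separation to produce a bounded dual element; the factor $(1+\eps)^{\lambda_k}$ emerges from its asymptotic behaviour near $\lambda_k$.

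For (b), given $f \in M^p_\Lambda$, set $b_k := \phi_k(f)$. If $p_n = \sum_j a_j^{(n)} x^{\lambda_j} \to f$ in $L^p$, then by continuity $a_k^{(n)} \to b_k$ for each fixed $k$, so the $b_k$ are the natural coefficients. Fix $x_0 \in [0,1)$ and choose $\eps > 0$ so that $(1+\eps)x_0 < 1$; the estimate from (a) gives
\[
\sum_{k\geq M} |b_k|\, x_0^{\lambda_k} \leq \|f\|_{L^p} \sum_{k\geq M} \frac{((1+\eps)x_0)^{\lambda_k}}{\|x^{\lambda_k}\|_{L^p}},
\]
and since $\|x^{\lambda_k}\|_{L^p} = (p\lambda_k + 1)^{-1/p}$ decays only polynomially while the geometric factor decays exponentially, this sum converges and yields uniform convergence of $\sum b_k x^{\lambda_k}$ on $[0, x_0]$. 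Denoting the sum by $g$, the difference $f - g$ lies in $M^p_\Lambda$ and is annihilated by every $\phi_j$; minimality \eqref{eq:minimal2} implies that the family $\{\phi_j\}$ is total on $M^p_\Lambda$, so $f = g$.

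The principal obstacle is clearly step (a): producing biorthogonal functionals with explicit $(1+\eps)^{\lambda_k}$ growth on their norms is the hard analytic input, and it is exactly here that both hypotheses are used — the Müntz condition to make the Blaschke-type product converge and the gap condition to prevent its degeneracy. Once this estimate is in hand, the rest of the proof is a standard density argument followed by a geometric-series tail bound.
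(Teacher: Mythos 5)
First, a point of reference: the paper does not prove this theorem at all --- it is quoted verbatim from the Gurariy--Lusky monograph \cite{Gur05} as the classical Clarkson--Erd\H{o}s theorem --- so your proposal can only be measured against the standard proof in the literature. Your step (a) is indeed that standard proof: one builds biorthogonal functionals $\phi_k$ whose norms are controlled by the Blaschke-type product $\prod_{j\neq k}\bigl|\tfrac{\lambda_k+\lambda_j+1}{\lambda_k-\lambda_j}\bigr|$, the M\"untz condition makes the product converge, the gap condition keeps the near-diagonal factors from blowing up, and the whole product is $O_\eps((1+\eps)^{\lambda_k})$. You have correctly located the hard analytic input and where each hypothesis enters, and the tail estimate $\sum_{k\ge M}(p\lambda_k+1)^{1/p}((1+\eps)x_0)^{\lambda_k}<\infty$ giving uniform convergence on $[0,x_0]$ is right.

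The genuine gap is in your final identification $f=g$. You write that $f-g$ lies in $M^p_\Lambda$ and is killed by every $\phi_j$, and that minimality \eqref{eq:minimal2} makes the family $\{\phi_j\}$ total. Neither claim is justified: (1) $g$ is defined only as a locally uniform limit on $[0,1)$, and nothing you have said puts $g$ in $L^p([0,1])$ --- the series could in principle blow up as $x\to1$ --- let alone in the closed span $M^p_\Lambda$, so the expression $f-g$ does not yet live in any space where the $\phi_j$ act; (2) minimality of a system yields the \emph{existence} of biorthogonal functionals but not their \emph{totality} (a minimal system need not be a Markushevich basis), and totality of the $\phi_j$ on $M^p_\Lambda$ is essentially equivalent to the theorem you are trying to prove, so invoking it here is circular. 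The standard repair avoids $g$ as an element of $M^p_\Lambda$ entirely: with $p_n=\sum_j a_j^{(n)}x^{\lambda_j}\to f$ in $L^p$, split
\[
|p_n(x)-g(x)|\le\sum_{k<M}|a_k^{(n)}-b_k|\,x^{\lambda_k}
+\bigl(\sup_n\|p_n\|_p+\|f\|_p\bigr)\sum_{k\ge M}(p\lambda_k+1)^{1/p}\bigl((1+\eps)x_0\bigr)^{\lambda_k}
\]
for $x\le x_0$, where the tail is small uniformly in $n$ by your estimate (a) applied to both $a_k^{(n)}$ and $b_k$, and the finite head tends to $0$ since $a_k^{(n)}\to b_k$. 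Hence $p_n\to g$ uniformly on $[0,x_0]$; a subsequence of $p_n$ converges to $f$ a.e., so $f=g$ a.e.\ on $[0,1)$, and by continuity of $f$ on $[0,1)$ everywhere there. With that substitution your argument closes, and the coefficient bound \eqref{eq:Clark-Erdos} is exactly $|b_k|=|\phi_k(f)|\le\|\phi_k\|\,\|f\|_p$.
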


A sequence $\Lambda$ is called \emph{lacunary} if for some $\gamma>1$ we have $\lambda_{n+1}/\lambda_n\geq \gamma$ for
$n\geq 1$. More generally, $\Lambda$ is called \emph{quasilacunary} if for some increasing sequence $\{n_k\}$ of
integers with $N:=\sup_k(n_{k+1}-n_k)<\infty$ and some $\gamma>1$ we have $\lambda_{n_{k+1}}/\lambda_{n_k}\geq \gamma$.
The main feature of lacunarity is that the monomials $\lambda_n^{1/p}x^{\lambda_n}$ form a basis in each of the spaces $M^p_\Lambda$. In particular, the sequence $( \lambda_n^{1/2} x^{\lambda_n})_{n\ge1}$ forms a Riesz basis in~$M^2_\Lambda$.

If $T:\mathcal{E}\to\mathcal{F}$ is a bounded operator between
Banach spaces, we define by
$\norm{T}_e=\inf_\mathcal{K}\|T+\mathcal{K}\|$ the
$\emph{essential norm}$ of an operator, where the infimum is
taken over all compact operators
$\mathcal{K}:\mathcal{E}\to\mathcal{F}$. This norm measures
how far an operator is from being compact. In
particular, $T$ is compact if and only if $\|T\|_e=0$.

The \emph{Schatten--Von Neumann class} $\sh_q(\hs_1,\hs_2)$ is formed by the compact Hilbert space operators
$T:\hs_1\to\hs_2$ such that  $|T|=\sqrt{T^*T}:\hs_1\to\hs_1$ has a
family of eigenvalues $\{s_n(T)\}_{n=1}^\infty\in\ell_q$. If we define
\[
\|T\|_q= \left(
\sum_{n=1}^\infty s_n(T)^q
\right)^{1/q},
\]
then we obtain a quasinorm for $0<q<1$ and a norm for $q\ge 1$, with respect to which $\sh_q(\hs_1,\hs_2)$ is complete. It is immediate  that $\norm{T}_q\geq\norm{T}_{q'}$ for $q\leq q'$, hence~$\sh_q\subset \sh_{q'}$.

We now define $\Lambda$-embedding measures which were previously studied in \cite{Dan10} and~\cite{Noor2011}:
\begin{defn}
A positive measure $\mu$ on $[0,1]$ is
called $\Lambda_p$ -\emph{embedding}, if there is a constant
$C>0$ such that \begin{equation}\norm{g}_{L^p(\mu)}\leq
C\norm{g}_p\end{equation} for all polynomials $g\in
M^p_\Lambda$. Whenever $p$ is clear from the context,
we will remove subscript $p$ and use the notation
$\Lambda$-embedding.
\end{defn}

It follows easily from the definition (see~\cite{Dan10}) that a $\Lambda_p$-embedding measure $\mu$ has to satisfy $\mu({1})=0$. Therefore, as in Remark 2.5 of \cite{Dan10}, we may extend the embedding to all $f\in M^p_\Lambda$: if $\mu$ is $\Lambda_p$-embedding, then
$M^p_\Lambda\subset L^p(\mu)$ and $\norm{f}_{L^p(\mu)}\leq C\norm{f}_p$ for all $f\in M^p_\Lambda$.
For a $\Lambda_p$-embedding $\mu$ we denote by $i^p_\mu$ the embedding operator
$i^p_\mu:M^p_\Lambda\hookrightarrow L^p(\mu)$, which is bounded. If $0<\eps<1$, then the
interval $[1-\eps,1]$ will be denoted by $J_\eps$.

The next result is proved in \cite{Dan10} for $p=1$, but the extension to all $p\geq1$ is straightforward.

\begin{prop}\label{co:AbsCont h}
Let $M^p_\Lambda$ be a M\"{u}ntz space, and suppose there
exists $\delta>0$ such that $d\mu|_{J_\delta}=h\,dm|_{J_\delta}$
for some bounded measurable function $h$ with
$\lim_{t\rightarrow1}h(t)=a$. Then $i^p_\mu$ is bounded and
$||i_\mu^p||_e=a^{1/p}$.
\end{prop}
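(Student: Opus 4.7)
My plan is to adapt the $p = 1$ argument of Dan10, splitting the embedding according to whether the integration takes place near or away from the endpoint $1$. Throughout, the relevant test functions will be the normalized monomials $f_n(x) = (p\lambda_n + 1)^{1/p} x^{\lambda_n}$, which satisfy $\norm{f_n}_p = 1$ since $\norm{x^{\lambda_n}}_p^p = 1/(p\lambda_n + 1)$.

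For the upper bound on the essential norm I would fix $\eta > 0$ and choose $\delta' \in (0, \delta]$ with $h(t) \leq a + \eta$ for $t \in J_{\delta'}$. Setting $K_{\delta'} f = \chi_{[0, 1-\delta']} f$, viewed as an element of $L^p(\mu)$, a direct estimate gives
\[
\norm{(i_\mu^p - K_{\delta'}) f}_{L^p(\mu)}^p = \int_{J_{\delta'}} |f|^p h \, dm \leq (a + \eta) \norm{f}_p^p,
\]
hence $\norm{i_\mu^p - K_{\delta'}} \leq (a + \eta)^{1/p}$. Granted compactness of $K_{\delta'}$, this simultaneously yields boundedness of $i_\mu^p$ and, on letting $\eta \to 0^+$, the bound $\norm{i_\mu^p}_e \leq a^{1/p}$.

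For the matching lower bound, I would compute $\norm{f_n}_{L^p(\mu)}^p = (p\lambda_n+1) \int_0^1 x^{p\lambda_n}\, d\mu$. The contribution from $[0, 1-\delta]$ is at most $(p\lambda_n + 1)(1-\delta)^{p\lambda_n} \mu([0,1]) \to 0$, and on $J_\delta$ the density $(p\lambda_n + 1) x^{p\lambda_n}$ is a standard approximate identity at $1$ (total mass $1$, concentrating on an interval of length $O(1/\lambda_n)$ near $1$), so the bounded measurability of $h$ together with $\lim_{t \to 1} h(t) = a$ yields $\int_{J_\delta} (p\lambda_n + 1) x^{p\lambda_n} h(x)\, dm \to a$. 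Thus $\norm{f_n}_{L^p(\mu)} \to a^{1/p}$. Combined with $\norm{K f_n}_{L^p(\mu)} \to 0$ for every compact $K$, this forces $\norm{i_\mu^p - K} \geq \limsup_n \norm{i_\mu^p f_n - K f_n}_{L^p(\mu)} = a^{1/p}$, so $\norm{i_\mu^p}_e \geq a^{1/p}$.

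The two facts still to justify are the compactness of $K_{\delta'}$ and the convergence $\norm{K f_n} \to 0$, and these are where the main technical work sits. For the compactness, since functions in $M^p_\Lambda$ are real analytic on $(0,1)$ with coefficient bounds furnished by Clarkson--Erd\H{o}s, the unit ball of $M^p_\Lambda$ restricts to a normal family on any open neighborhood of $[0, 1-\delta']$, hence is relatively compact in $C[0, 1-\delta']$ by Arzel\`a--Ascoli; composing with the bounded inclusion $C[0, 1-\delta'] \hookrightarrow L^p(\mu|_{[0, 1-\delta']})$ gives compactness of $K_{\delta'}$. For the convergence, when $p > 1$ a direct asymptotic $\int_0^1 f_n g \, dm = O(\lambda_n^{1/p-1})$ for continuous $g$, combined with density and reflexivity of $M^p_\Lambda$, gives $f_n \rightharpoonup 0$, so compact operators send $(f_n)$ to zero in norm; the case $p = 1$ is the genuinely subtle one since $f_n$ converges weak-$*$ to a point mass at $1$ rather than to zero, and is handled exactly as in Dan10 by using the norm-compactness of $K(\{f_n\})$ in $L^1(\mu)$ directly. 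This last point is the only real obstacle; modulo it, the proof is essentially a transcription of the $p = 1$ argument with $|\cdot|$ replaced by $|\cdot|^p$.
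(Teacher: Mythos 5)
The paper does not actually prove this proposition; it cites \cite{Dan10} for $p=1$ and asserts that the extension to all $p\ge1$ is straightforward, so your reconstruction can only be compared against that source. Your plan is the same one: split off the restriction to $[0,1-\delta']$, which is compact because the unit ball of $M^p_\Lambda$ is relatively compact in $C([0,1-\delta'])$ (Clarkson--Erd\H{o}s coefficient bounds plus Arzel\`a--Ascoli, which also makes $\chi_{[0,1-\delta']}f$ well defined in $L^p(\mu)$ even where $\mu$ is singular), bound the remaining piece by $(a+\eta)^{1/p}$, and obtain the matching lower bound from the normalized monomials $f_n$. All of this is correct, and your approximate-identity computation showing $\|f_n\|_{L^p(\mu)}^p\to a$ is the right one.

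The one step that fails as literally written is the claim that $\|Kf_n\|_{L^p(\mu)}\to0$ for \emph{every} compact $K$ when $p=1$: the rank-one operator $Kf=\bigl(\int_0^1 f\,dm\bigr)g$ is compact and satisfies $Kf_n=g$ for all $n$, since $\int_0^1 f_n\,dm=1$; so the chain $\|i^1_\mu-K\|\ge\limsup_n\|i^1_\mu f_n-Kf_n\|=a$ is not valid at $p=1$. You do flag this as the subtle point and name the correct repair --- extract a subsequence with $Kf_{n_j}\to g$ in norm in $L^1(\mu)$ and estimate $\|f_{n_j}-g\|_{L^1(\mu)}\ge\int_{J_\eps}|f_{n_j}|\,d\mu-\int_{J_\eps}|g|\,d\mu$, letting $\eps\to0$ after $j\to\infty$ (using $\mu(\{1\})=0$) --- so the gap is one of exposition rather than substance. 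For $p>1$ your argument is complete, and reflexivity is not even needed: compact operators on any Banach space send weakly null sequences to norm-null ones, and $f_n\rightharpoonup0$ in $L^p$ already gives $f_n\rightharpoonup0$ in the subspace $M^p_\Lambda$ by Hahn--Banach.
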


A new class of measures called sublinear measures was introduced in
\cite{Dan10}. There they were used to characterize embedding operators
$i_\mu:M^1_\Lambda\hookrightarrow L^1(\mu)$ for the class of
quasilacunary sequences $\Lambda$.

\begin{defn}
A measure $\mu$ is called \emph{sublinear} if there is a
constant $C>0$ such that for any $0<\eps<1$ we have
$\mu(J_\eps)\leq C\eps$. The smallest such $C$ will be denoted
by $\norm{\mu}_S$. The measure $\mu$ is called \emph{vanishing
sublinear} if $\lim_{\eps\rightarrow
0}\frac{\mu(J_\eps)}{\eps}=0$. Furthermore, a measure $\mu$ is called $\alpha$-\emph{sublinear} if $\mu(J_\eps)\leq C\eps^{\alpha}$ for some $\alpha>1$.
\end{defn}

The main embedding results in \cite{Noor2011} are contained in the next two theorems:

\begin{thm}\label{co:SublinearBounded}Let $\Lambda$ be lacunary and $\mu$ a positive measure on $[0,1]$. Then\\
$(i)$ $i_\mu^2$ is bounded if $\mu$ is sublinear.\\
$(ii)$ $i_\mu^2$ is compact if $\mu$ is vanishing sublinear.
\end{thm}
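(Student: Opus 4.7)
The strategy for (i) is to pass from the embedding inequality to the boundedness of an explicit infinite matrix on $\ell^2$, using the fact that lacunarity makes $(\sqrt{\lambda_n}\,x^{\lambda_n})_{n\ge 1}$ a Riesz basis of $M^2_\Lambda$. Writing $f=\sum_n a_n x^{\lambda_n}\in M^2_\Lambda$, the basis property gives $\|f\|_2^2\asymp \sum_n |a_n|^2/\lambda_n$, while expanding the square produces
\[
\|f\|_{L^2(\mu)}^2 = \sum_{n,m} a_n\bar a_m M_{n,m},\qquad M_{n,m}:=\int_0^1 x^{\lambda_n+\lambda_m}\,d\mu(x).
\]
The heart of the argument, and the step where I expect the main work to lie, is the moment estimate: for any $\lambda>0$, Fubini together with the pointwise bound $\mu([t,1])\le \|\mu\|_S(1-t)$ yields
\[
\int_0^1 x^\lambda\,d\mu(x)=\int_0^1 \lambda t^{\lambda-1}\mu((t,1])\,dt\le \|\mu\|_S\int_0^1 \lambda t^{\lambda-1}(1-t)\,dt = \frac{\|\mu\|_S}{\lambda+1}.
\]
This is the bridge between the abstract sublinearity hypothesis and the matrix-level estimate to come; crucially the constant is proportional to $\|\mu\|_S$, which is what will make the approximation argument in (ii) succeed.

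Substituting $b_n=a_n/\sqrt{\lambda_n}$, so that $\|f\|_2^2\asymp \|b\|_{\ell^2}^2$, (i) reduces to the boundedness on $\ell^2$ of the symmetric matrix $A_{n,m}=\sqrt{\lambda_n\lambda_m}\,M_{n,m}$, which by the moment estimate is dominated entrywise by $\|\mu\|_S\cdot K_{n,m}$ where $K_{n,m}=\sqrt{\lambda_n\lambda_m}/(\lambda_n+\lambda_m)$. Schur's test now applies cleanly: setting $t_m=\lambda_m/\lambda_n$, lacunarity gives $t_m\ge \gamma^{m-n}$ when $m\ge n$ and $t_m\le \gamma^{-(n-m)}$ when $m\le n$, whence
\[
\sum_m K_{n,m} = \sum_m \frac{\sqrt{t_m}}{1+t_m}\le 2\sum_{k\ge 0} \gamma^{-k/2}<\infty,
\]
uniformly in $n$; by symmetry the column sums are bounded by the same constant. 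Thus $\|i^2_\mu\|^2\le C\|\mu\|_S$, proving (i).

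For (ii), define $T_\delta:M^2_\Lambda\to L^2(\mu)$ by $T_\delta f=f\cdot\mathbf{1}_{[0,1-\delta]}$. Setting $\mu_\delta:=\mu|_{J_\delta}$, the vanishing sublinear hypothesis gives $\|\mu_\delta\|_S\to 0$ as $\delta\to 0$, so applying (i) to $\mu_\delta$ yields $\|i^2_\mu-T_\delta\|^2\le C\|\mu_\delta\|_S\to 0$; it therefore suffices to prove that each $T_\delta$ is compact. For $(f_k)$ bounded in $M^2_\Lambda$, the Clarkson--Erd\"os estimate~\eqref{eq:Clark-Erdos} (applicable since lacunarity ensures $\inf_k(\lambda_{k+1}-\lambda_k)>0$) together with a choice of $\eps$ so small that $(1+\eps)(1-\delta/2)<1$ produces a convergent geometric majorant for both $|f_k|$ and $|f_k'|$ on $[0,1-\delta]$. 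Arzel\`a--Ascoli then delivers a subsequence converging uniformly on $[0,1-\delta]$, hence in $L^2(\mu|_{[0,1-\delta]})$, so $T_\delta$ is compact and $i^2_\mu$ is a norm limit of compact operators.
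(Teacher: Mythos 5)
Your proof is correct; the paper itself states this theorem without proof, importing it from \cite{Noor2011}, and the argument there is essentially the one you give: the moment bound $\int_0^1 x^\lambda\,d\mu\le\norm{\mu}_S/(\lambda+1)$ obtained from sublinearity by Fubini, the reduction via the Riesz basis $(\sqrt{\lambda_n}\,x^{\lambda_n})$ to a Schur-test estimate for the matrix $\bigl(\sqrt{\lambda_n\lambda_m}/(\lambda_n+\lambda_m)\bigr)$, and for (ii) the splitting $\mu=\mu|_{[0,1-\delta]}+\mu|_{J_\delta}$ with the tail controlled by $\norm{\mu|_{J_\delta}}_S\to0$ and the compactly supported part giving a compact embedding. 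The only step worth tightening is the equicontinuity argument for $T_\delta$: when some $\lambda_j<1$ the derivative majorant degenerates near $x=0$, but equicontinuity on $[0,1-\delta]$ still holds because those finitely many terms have coefficients uniformly bounded over the unit ball (minimality of the system) while the tail of the series converges uniformly there.
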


The above results are shown in \cite{Noor2011} to be true, after an interpolation argument, for all embeddings $i_\mu^p$ for $1\leq p\leq 2$.

  In \cite{Noor2011}, we also investigated conditions for measures that enabled the embedding $i^2_\mu$ to belong to $\sh_q$. We shall need the main results therein:

\begin{thm}\label{th:c_p embedding compact support}Let $\mu$ be a positive measure on $[0,1]$. Then $i^2_\mu\in \Shq$ for all $q>0$ if either of the following is true\\
$(i)$ $\mu$ has compact support in $[0,1)$,\\
$(ii)$ $\Lambda$ is quasilacunary and $\mu$ is $\,\alpha$-sublinear.
\end{thm}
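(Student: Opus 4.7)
The idea is to bound the approximation numbers $a_n(i_\mu^2)$, which coincide with the singular values $s_n(i_\mu^2)$ in the Hilbert setting, by an exponentially small quantity. Once $a_n(i_\mu^2) \lesssim r^{\lambda_n}$ for some $r\in(0,1)$, summability of $\sum_n r^{q\lambda_n}$ (which follows from $\lambda_n\to\infty$) immediately places $i_\mu^2 \in \Shq$ for every $q>0$. In each case the plan is to exhibit a finite-rank operator $S_n\colon M^2_\Lambda \to L^2(\mu)$ of rank at most $n$ with $\|i_\mu^2 - S_n\| \lesssim r^{\lambda_n}$.

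For $(i)$, fix $\delta>0$ with $\supp\mu\subset[0,1-\delta]$, so that $\|x^{\lambda_k}\|_{L^2(\mu)} \le (1-\delta)^{\lambda_k}\sqrt{\mu([0,1])}$. Minimality of the M\"untz monomials furnishes biorthogonal bounded functionals $\phi_k$ on $M^2_\Lambda$, and Theorem~\ref{th:Cl-Erd} identifies $\phi_k(f)$ with the Clarkson--Erd\"os coefficient $b_k$, giving $|b_k|\cdot\|x^{\lambda_k}\|_2 \le (1+\varepsilon)^{\lambda_k}\|f\|_2$ once $k \ge M(\varepsilon)$. Setting $S_n f = \sum_{k<n}\phi_k(f)x^{\lambda_k}$, viewed as an element of $L^2(\mu)$, and choosing $\varepsilon$ so that $r:=(1+\varepsilon)(1-\delta)<1$, the triangle inequality gives
\[
\|(i_\mu^2 - S_n)f\|_{L^2(\mu)} \le \sqrt{\mu([0,1])}\,\|f\|_2 \sum_{k\ge n}(2\lambda_k+1)^{1/2}\,r^{\lambda_k},
\]
which is dominated by $r^{\lambda_n}$ up to polynomial factors.

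For $(ii)$, quasilacunarity provides an expansion $f=\sum c_k\lambda_k^{1/2}x^{\lambda_k}$ with $\sum|c_k|^2 \lesssim \|f\|_2^2$. Integration by parts combined with the $\alpha$-sublinear bound $\mu([x,1]) \le C(1-x)^\alpha$ yields
\[
\|x^{\lambda_k}\|_{L^2(\mu)}^2 = 2\lambda_k\int_0^1 x^{2\lambda_k-1}\mu([x,1])\,dx \le 2C\lambda_k\,B(2\lambda_k,\alpha+1),
\]
which by Stirling is $\lesssim \lambda_k^{-\alpha}$. Letting $S_n$ be the projection onto the first $n$ Riesz basis vectors and applying Cauchy--Schwarz gives
\[
\|(i_\mu^2 - S_n)f\|_{L^2(\mu)}^2 \lesssim \|f\|_2^2\sum_{k\ge n}\lambda_k^{1-\alpha}.
\]
Quasilacunarity forces $\lambda_k$ to grow geometrically in groups of size at most $N$, so the right-hand tail decays exponentially in $n$ since $\alpha>1$.

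The main obstacle in $(i)$ is the careful identification of the Clarkson--Erd\"os coefficients with genuinely bounded linear functionals on $M^2_\Lambda$, needed so that $S_n$ is a bona fide finite-rank operator; in $(ii)$ it is the uniform-in-$k$ Stirling asymptotic $B(2\lambda_k,\alpha+1)\sim\Gamma(\alpha+1)(2\lambda_k)^{-\alpha-1}$ and verifying that the Riesz basis tail bound really translates into operator-norm approximation. Both amount to bookkeeping once the main geometric estimates above are in hand.
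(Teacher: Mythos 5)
The paper itself contains no proof of this theorem; it is quoted from the prequel \cite{Noor2011}, so there is no in-paper argument to compare against. Your overall strategy is the natural one and is sound in outline: produce finite-rank $S_n$ with $\norm{i^2_\mu-S_n}$ exponentially small, so that the approximation numbers, hence the singular values, lie in $\ell_q$ for every $q>0$. But as written each part has a gap.

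In $(i)$ you build the functionals $\phi_k$ from Theorem~\ref{th:Cl-Erd}, which carries the extra hypothesis $\inf_k(\lambda_{k+1}-\lambda_k)>0$. The statement you are proving assumes only that $\Lambda$ is increasing with $\sum_k 1/\lambda_k<\infty$, and without the gap condition the Clarkson--Erd\"{o}s series representation and the bound \eqref{eq:Clark-Erdos} are not available from the paper's toolkit; your argument therefore covers only sequences satisfying the gap condition. (The issue you flag yourself, the uniformity in $f$ of the threshold in \eqref{eq:Clark-Erdos}, is the lesser worry and is resolvable.) The rest of the estimate in $(i)$ --- $\norm{x^{\lambda_k}}_{L^2(\mu)}\leq(1-\delta)^{\lambda_k}\mu([0,1])^{1/2}$, the choice of $\eps$ with $(1+\eps)(1-\delta)<1$, and the summation using $\lambda_k/k\to\infty$ --- is correct.

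The more serious gap is in $(ii)$. The inequality $\sum_k|c_k|^2\lesssim\norm{f}_2^2$ for the expansion $f=\sum_k c_k\lambda_k^{1/2}x^{\lambda_k}$ is a Riesz-basis lower bound, and the paper asserts the Riesz basis property only for \emph{lacunary} $\Lambda$. For a merely quasilacunary sequence it can fail: within a block the consecutive exponents may satisfy $\lambda_{j+1}/\lambda_j\to 1$ arbitrarily fast, in which case the normalized monomials $e_j=\sqrt{2\lambda_j+1}\,x^{\lambda_j}$ satisfy $\langle e_j,e_{j+1}\rangle\to 1$, so $\norm{e_j-e_{j+1}}_2\to 0$ while the coefficient vector has $\ell_2$-norm $\sqrt{2}$; the biorthogonal functionals are then unbounded and your Cauchy--Schwarz step collapses. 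Your argument proves $(ii)$ only for lacunary $\Lambda$. To reach the quasilacunary case one must replace individual coefficients by the block decomposition into the spans $E_k=\mathrm{span}\{x^{\lambda_j}:n_k<j\leq n_{k+1}\}$ (or reduce quasilacunary to lacunary as in \cite{Dan10}) and estimate $\norm{g}_{L^2(\mu)}$ for $g\in E_k$ rather than for single monomials. The ingredients that are correct and worth keeping are the integration-by-parts/Beta-function bound $\norm{x^{\lambda_k}}_{L^2(\mu)}^2\lesssim\lambda_k^{-\alpha}$ from $\alpha$-sublinearity and the final geometric summation over blocks.
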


Our goal is to apply these embedding results to composition operators. Recall that the pullback of a measure $\nu$
by $\phi$ is the measure $\phi^\ast\nu$ on $[0,1]$ defined by
$$\phi^\ast\nu(E)=\nu(\phi^{-1}(E))$$
for any Borel set $E$. If $g$ is a positive measurable function, then the formula
\[
\int^1_0g(\phi(x))dx=\int_{[0,1]}g\,d(\phi^* m)
\]
is easily checked on characteristic functions, hence the usual
argument extends it to all positive Borel functions on $[0,1]$. In particular,
if we define $\mu=\phi^\ast m$ and choose $g=|f|^p$ for some $f\in L^p(\mu)$, then the
map $J:L^p(\mu)\To L^p$ defined by $J(f)=f\circ\phi$ is
an isometry.

Let $\phi$ be a Borel function on $[0,1]$ such that
$\phi([0,1])\subset[0,1]$. The \emph{composition operator} $C_\phi$ is
defined as
\[
C_\phi(g)=g\circ\phi
\]
for all polynomials $g\in M^p_\Lambda$. Just as we did for $i_\mu^p$, we can extend $C_\phi=J\circ i^p_\mu$ to all $f\in M^p_\Lambda$. Since $J$ is an isometry, we obtain the following results for composition operators.

\begin{lem}\label{le:4parts}Define the measure $\mu=\phi^\ast m$. Then \\
(i) $C_\phi$ is bounded from $M^p_\Lambda$ to $L^p$ if and only if $\mu$
is a $\Lambda_p$-embedding measure.\\
(ii) $C_\phi$ is compact from $M^p_\Lambda$ to $L^p$ if and only if $i_\mu^p$
is  compact.\\
(iii) $ C_\phi\in\sh_q(M^2_\Lambda,L^2)$ if and only if $i_\mu^2\in\Shq$.
\end{lem}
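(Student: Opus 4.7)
The approach rests entirely on the factorization $C_\phi = J \circ i^p_\mu$ noted just before the statement, where $J : L^p(\mu) \to L^p$, $f \mapsto f \circ \phi$, is an isometry by the change-of-variables identity for $\mu = \phi^\ast m$. Since $J$ preserves norms, all three properties of $C_\phi$ should translate transparently into the analogous properties of $i^p_\mu$.

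For part $(i)$, I would simply compute, for any polynomial $g \in M^p_\Lambda$,
\[
\|C_\phi g\|_{L^p} = \|g\circ\phi\|_{L^p} = \|g\|_{L^p(\mu)}.
\]
Thus $\|C_\phi g\|_{L^p} \le C\|g\|_{L^p}$ holds for all such $g$ exactly when $\mu$ is $\Lambda_p$-embedding, and the passage from polynomials to all of $M^p_\Lambda$ is the extension already noted in the excerpt just after the definition of embedding measure.

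For $(ii)$, the implication $i^p_\mu$ compact $\Rightarrow C_\phi$ compact is automatic, since a bounded operator composed with a compact one is compact. For the converse, since $J$ is isometric its range $J(L^p(\mu))$ is closed in $L^p$ and $J$ admits a norm-one left inverse $J^{-1}$ defined on this range. Because $C_\phi = J \circ i^p_\mu$ takes values inside $J(L^p(\mu))$, the identity $i^p_\mu = J^{-1}\circ C_\phi$ is legitimate, and compactness of $C_\phi$ transfers to $i^p_\mu$.

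For $(iii)$, I would use $J^\ast J = I_{L^2(\mu)}$, which holds because $J$ is an isometry, to write
\[
C_\phi^\ast C_\phi = (i^2_\mu)^\ast J^\ast J\, i^2_\mu = (i^2_\mu)^\ast i^2_\mu.
\]
Hence $|C_\phi| = |i^2_\mu|$, the singular value sequences agree, and membership in $\sh_q$ is equivalent for the two operators. No step here presents a genuine obstacle: the whole lemma is a bookkeeping consequence of the isometric factorization, the only substantive ingredient being the change-of-variables identity that the excerpt has already recorded.
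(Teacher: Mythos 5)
Your proposal is correct and follows exactly the route the paper takes: the paper offers no further argument beyond observing that $C_\phi = J\circ i^p_\mu$ with $J$ an isometry, and your write-up merely fills in the routine details (the closed range of $J$ for the converse in (ii), and $J^\ast J = I$ for the equality of singular values in (iii)). Nothing is missing.
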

\newpage{}
\section{M\"{u}ntz Spaces are not Invariant to Most Composition Operators}
It has already appeared above that we study composition operators defined on $M^p_\Lambda$, but whose range space is $L^p$. The reason is that M\"{u}ntz spaces are usually not invariant with respect to composition. This has already been noticed by Al Alam \cite{Alam09}, in the case of M\"{u}ntz space $M^\infty_\Lambda$, i.e. the closure of the span of monomials $x^{\lambda_n}$ in $L^\infty$, and operators $C_\phi$ with continuous $\phi$. The following result was proved therein.
\begin{prop}\label{prop:Alam}Let $\Lambda=(\lambda_k)_k\subset\mathbb{N}$ and $\sum_k\frac{1}{\lambda_k}<\infty$. Then\\
$(i)$ $C_\phi M^\infty_\Lambda\nsubseteq M^\infty_\Lambda$ if $\phi=\alpha x^m+\beta x^n$ with $\alpha,\beta\neq 0$ and $m,n\in\mathbb{N}$.\\
$(ii)$ $C_\phi M^\infty_\Lambda\nsubseteq M^\infty_\Lambda$ if $\phi$ is a polynomial with positive coefficients and more than one term.
\end{prop}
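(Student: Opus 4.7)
The plan is to argue both parts by contradiction, using the Clarkson--Erd\"{o}s Theorem to convert the assumed membership $\phi^{\lambda_k}\in M^\infty_\Lambda$ into an arithmetic obstruction on $\Lambda$ that clashes with $\sum_k 1/\lambda_k<\infty$. Although Theorem~\ref{th:Cl-Erd} is stated for $L^p$ with $p<\infty$, the inclusion $M^\infty_\Lambda\subset M^p_\Lambda$ (since $[0,1]$ has finite Lebesgue measure) legitimizes its use, and the gap hypothesis $\inf_k(\lambda_{k+1}-\lambda_k)\ge 1$ is automatic because $\Lambda\subset\mathbb{N}$.

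For part $(i)$, assume without loss of generality $m<n$ and set $P_k(x) := (\alpha x^m + \beta x^n)^{\lambda_k}$. If $C_\phi M^\infty_\Lambda\subset M^\infty_\Lambda$, then $P_k\in M^\infty_\Lambda$ for every $k$, so the Clarkson--Erd\"{o}s series $\sum_\ell b_\ell x^{\lambda_\ell}$ of $P_k$ on $[0,1)$ must coincide with its binomial expansion
\[
P_k(x) \;=\; \sum_{j=0}^{\lambda_k}\binom{\lambda_k}{j}\alpha^{\lambda_k-j}\beta^{j}\,x^{m\lambda_k+j(n-m)},
\]
whose coefficients are all nonzero since $\alpha,\beta\neq 0$. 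As both sides are genuine power series in nonnegative integer powers on $[0,1)$, uniqueness of Taylor coefficients forces the whole arithmetic progression $A_k:=\{m\lambda_k+j(n-m):0\le j\le \lambda_k\}$ to lie in $\Lambda$. Picking inductively $k_1<k_2<\cdots$ with $m\lambda_{k_{i+1}}>n\lambda_{k_i}$ (possible since $\lambda_k\to\infty$) makes the sets $A_{k_i}$ pairwise disjoint, and a Riemann-sum estimate gives
\[
\sum_{\lambda\in\Lambda}\frac{1}{\lambda}\;\ge\;\sum_{i}\sum_{j=0}^{\lambda_{k_i}}\frac{1}{m\lambda_{k_i}+j(n-m)}\;\ge\;\sum_{i}\Big(\tfrac{1}{n-m}\ln\tfrac{n}{m}-o(1)\Big)\;=\;\infty,
\]
contradicting $\sum_k 1/\lambda_k<\infty$.

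For part $(ii)$, write $\phi=\sum_{i=1}^{r}\alpha_i x^{n_i}$ with $\alpha_i>0$ and $n_1<\cdots<n_r$, $r\ge 2$. The positivity of the $\alpha_i$ prevents any cancellation in the multinomial expansion of $\phi^{\lambda_k}$, so restricting attention to multi-indices of the form $(\lambda_k-j,0,\dots,0,j)$ produces monomials $x^{n_1\lambda_k+j(n_r-n_1)}$ with strictly positive coefficients for $j=0,\dots,\lambda_k$. This is precisely the arithmetic progression of part $(i)$ with $m=n_1$, $n=n_r$, and the same contradiction closes the argument. The main obstacle in both parts is the Clarkson--Erd\"{o}s step, which converts the analytic condition $P_k\in M^\infty_\Lambda$ into the combinatorial requirement $A_k\subset\Lambda$; the hypothesis $\Lambda\subset\mathbb{N}$ is essential here so that both series are truly integer-power series and Taylor coefficients can be matched unambiguously.
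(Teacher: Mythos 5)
Your proof is essentially correct, but note first that the paper does not actually prove Proposition \ref{prop:Alam} --- it is quoted from Al Alam \cite{Alam09} --- so the natural comparison is with the paper's proof of its generalization, Theorem \ref{th:non-invariance}. There the two key inputs are Schinzel's lemma (Lemma \ref{le:erdos}, extended to real exponents in Lemma \ref{le:real erdos}), which guarantees that $\phi^{\lambda}$ has at least $\lambda+1$ surviving terms for an arbitrary polynomial $\phi$ with at least two terms, and the minimality property \eqref{eq:minimal2} via Lemma \ref{lem:polynomial lemma}, which forces those surviving exponents into $\Lambda$. You replace both ingredients: for a binomial, the expansion of $(\alpha x^m+\beta x^n)^{\lambda_k}$ visibly has $\lambda_k+1$ distinct exponents with nonzero coefficients, and positivity of the coefficients in part $(ii)$ rules out cancellation, so Schinzel's counting lemma is not needed; and you use the Clarkson--Erd\"{o}s expansion plus uniqueness of Taylor coefficients (legitimate here, since $\Lambda\subset\mathbb{N}$ gives the gap condition and makes both sides honest integer-power series) in place of the minimality argument. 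This is more elementary and self-contained for the special symbols of the proposition, at the cost of not extending to general polynomials, where cancellation can occur and Schinzel's lemma becomes essential. The final reciprocal-sum contradiction is the same in both treatments, and your Riemann-sum lower bound $\frac{1}{n-m}\ln\frac{n}{m}$ plays the role of the paper's cruder bound $(\lambda_k+1)/(s_l\lambda_k)\geq 1/s_l$.

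One small point to patch: the disjointness device $m\lambda_{k_{i+1}}>n\lambda_{k_i}$ and the bound $\frac{1}{n-m}\ln\frac{n}{m}$ both presuppose $m\geq 1$, i.e.\ that $\phi$ has no constant term. In part $(ii)$ a polynomial with positive coefficients may have one, in which case $n_1=0$ and the sets $A_k=\{jn_r:0\leq j\leq\lambda_k\}$ are nested rather than eventually disjoint. The contradiction still goes through --- then $\bigcup_k A_k=\{jn_r:j\geq 0\}\subset\Lambda$ and $\sum_{j\geq 1}1/(jn_r)$ already diverges --- but this case should be separated out explicitly.
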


In this section we will significantly extend these results to other values of $p\geq1$ and functions $\phi$. We prove in Theorem \ref{th:non-invariance} that $C_\phi M^p_\Lambda\nsubseteq M^p_\Lambda$ whenever $\phi$ is a function of the form $c_1x^{s_1}+\ldots+c_lx^{s_l}$ with $c_i\in\mathbb{R}$ and $s_i\in\mathbb{R}^+$. These functions will be called \emph{real-exponent polynomials}. This generalizes Proposition \ref{prop:Alam} and $\Lambda$ may not even satisfy the \emph{gap condition} $\inf_k(\lambda_{k+1}-\lambda_k)>0$. If we assume the gap condition, then Theorem \ref{th:gap-invariance} generalizes Proposition \ref{prop:Alam}$(i)$ for arbitrary $\Lambda\subset\mathbb{R}^+$.

We start with a result of A. Schinzel \cite{Sch87}:

\begin{lem}\label{le:erdos}If $\phi$ is a polynomial with at least two terms and $\lambda\in\mathbb{N}$, then $\phi^\lambda$ has at least $\lambda+1$ terms.
\end{lem}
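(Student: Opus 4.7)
The plan is to reduce the statement to a generalisation of Descartes' rule of signs, applied along an arbitrary ray through the origin in the complex plane.

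First I would normalise: by factoring out $x^{v}$, where $v$ is the smallest exponent appearing in $\phi$, we may assume without loss of generality that $\phi(0)\neq 0$, since this reduction preserves the number of non-zero terms of both $\phi$ and $\phi^{\lambda}$. Because $\phi$ has at least two terms it is not a monomial, so by the fundamental theorem of algebra it admits a root $\xi\in\mathbb{C}\setminus\{0\}$. Then $\phi^{\lambda}$ has $\xi$ as a zero of multiplicity at least $\lambda$.

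The key step is the following sub-lemma: \emph{any polynomial $P\in\mathbb{C}[x]$ with exactly $t$ non-zero terms has at most $t-1$ zeros, counted with multiplicity, on any open ray $\{re^{i\theta}:r>0\}$ emanating from the origin.} Granting this sub-lemma, applying it to $P=\phi^{\lambda}$ on the ray through $\xi$ gives at once
\[
(\text{number of terms of }\phi^{\lambda})-1 \;\geq\; \lambda,
\]
which is the desired inequality.

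To prove the sub-lemma I would substitute $x=re^{i\theta}$ to obtain
\[
Q(r)=P(re^{i\theta})=\sum_{k}\bigl(c_{k}e^{i n_{k}\theta}\bigr)r^{n_{k}}\in\mathbb{C}[r],
\]
a polynomial with the same number $t$ of non-zero terms, whose positive real zeros (with multiplicity) are exactly the zeros of $P$ lying on the given ray. Splitting $Q=Q_{1}+iQ_{2}$ with $Q_{1},Q_{2}\in\mathbb{R}[r]$, the elementary identity $\mathrm{ord}_{\alpha}Q=\min(\mathrm{ord}_{\alpha}Q_{1},\mathrm{ord}_{\alpha}Q_{2})$ for $\alpha\in\mathbb{R}$ shows that every positive real zero of $Q$ of multiplicity $m$ is also a zero of $Q_{1}$ (and of $Q_{2}$) of multiplicity at least $m$. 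A standard Rolle-theorem induction, which is the usual proof of Descartes' rule, establishes that a real polynomial with $s\geq 1$ non-zero terms has at most $s-1$ positive real zeros counted with multiplicity. Applying this to whichever of $Q_{1},Q_{2}$ is not identically zero, and using the trivial bound that the number of its non-zero terms is at most $t$, yields the sub-lemma.

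The main technical point to be careful about is the multiplicity bookkeeping in this complex version of Descartes' rule: verifying the identity for $\mathrm{ord}_{\alpha}Q$ above, and checking that the number of non-zero terms of $Q_{j}$ ($j=1,2$) is bounded by $t$ rather than by something like $2t$, so that the resulting bound is $t-1$ and not weaker. Once the sub-lemma is in hand, the deduction of the lemma is immediate.
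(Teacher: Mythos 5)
The paper does not actually prove this lemma: it is quoted verbatim as a result of Schinzel \cite{Sch87}, so there is no internal proof to compare against, and your argument supplies something the paper leaves to a reference. It is correct. The normalisation is harmless (dividing by $x^{v}$ changes neither the term count of $\phi$ nor of $\phi^{\lambda}$, and guarantees a non-zero root $\xi$, which $\phi^{\lambda}$ inherits with multiplicity at least $\lambda$). The sub-lemma carries the real content, and your two flagged bookkeeping points check out: for real $\alpha$ the factor $(r-\alpha)^{m}$ is a real polynomial, so it divides $Q=Q_{1}+iQ_{2}$ exactly when it divides both $Q_{1}$ and $Q_{2}$, giving $\mathrm{ord}_{\alpha}Q=\min(\mathrm{ord}_{\alpha}Q_{1},\mathrm{ord}_{\alpha}Q_{2})$; and since $Q_{1}$ and $Q_{2}$ are supported on the same exponent set $\{n_{k}\}$ as $Q$, each has at most $t$ terms, not $2t$. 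Together with the standard Rolle induction (divide by the lowest power, differentiate, lose one term and at most one positive zero counted with multiplicity), whichever of $Q_{1},Q_{2}$ is not identically zero has at most $t-1$ positive zeros with multiplicity, hence so does $Q$, hence so does $P$ on the ray; applied to $P=\phi^{\lambda}$ and the ray through $\xi$ this yields $t-1\geq\lambda$. Note that what you prove is precisely the weak form of Schinzel's theorem that the paper needs; the result in \cite{Sch87} is a sharper lower bound depending on the number of terms of $\phi$, obtained by harder methods. For the purposes of Lemma \ref{le:real erdos} and Theorem \ref{th:non-invariance}, your elementary root-counting argument suffices and makes the paper self-contained at this point.
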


The next result is an analog of Lemma \ref{le:erdos} for real-exponent polynomials.
\begin{lem}\label{le:real erdos}If $\phi$ is a real-exponent polynomial with at least two terms and $\lambda\in\mathbb{N}$, then $\phi^\lambda$ has at least $\lambda+1$ terms.
\end{lem}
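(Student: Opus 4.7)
The plan is to reduce the statement to the integer-exponent case (Lemma~\ref{le:erdos}) by two consecutive term-count preserving substitutions. Write $\phi(x)=\sum_{i=1}^{l}c_ix^{s_i}$ with $l\ge 2$ and $c_i\ne 0$.

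First I would linearize the exponents over $\mathbb{Q}$. Let $\{v_1,\ldots,v_d\}$ be a $\mathbb{Q}$-basis of the rational span of $\{s_1,\ldots,s_l\}\subset\mathbb{R}$ and write $s_i=\sum_{k=1}^{d}a_{ik}v_k$ with $a_{ik}\in\mathbb{Q}$. After clearing denominators by some $N\in\mathbb{Z}^+$, I associate to $\phi$ the integer-exponent multinomial $\tilde{\phi}(y_1,\ldots,y_d)=\sum_{i=1}^{l}c_i\prod_{k=1}^{d}y_k^{Na_{ik}}$, which recovers $\phi(x)$ under the formal substitution $y_k=x^{v_k/N}$. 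Because $v_1,\ldots,v_d$ are $\mathbb{Q}$-linearly independent, distinct integer multi-exponents appearing in $\tilde{\phi}$ (or in $\tilde{\phi}^\lambda$) correspond to distinct real exponents of $\phi$ (or of $\phi^\lambda$); no two monomials collapse, so $\phi$ and $\tilde{\phi}$ have the same number of nonzero terms, and similarly for their $\lambda$th powers.

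Next I would reduce the multinomial to a single variable by the substitution $y_k=w^{K^{k-1}}$ for a large integer $K$. The multi-exponents occurring in $\tilde{\phi}^\lambda$ have coordinates bounded in absolute value by $\lambda\max_{i,k}|Na_{ik}|$, so for $K$ larger than twice this bound the map $(m_1,\ldots,m_d)\mapsto\sum_{k}m_kK^{k-1}$ is injective on these multi-exponents by uniqueness of balanced base-$K$ expansions with bounded digits. Hence the resulting univariate Laurent polynomial $f(w)=\tilde{\phi}(w,w^K,\ldots,w^{K^{d-1}})$ has the same number of terms as $\tilde{\phi}$, and $f^\lambda$ the same number as $\tilde{\phi}^\lambda$. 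After multiplying $f$ by a sufficiently large power of $w$ to obtain an ordinary polynomial (still with $\ge 2$ terms), Lemma~\ref{le:erdos} applies and yields at least $\lambda+1$ terms in $f^\lambda$, and the chain of equalities delivers the same bound for $\phi^\lambda$.

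The technical heart will lie in verifying term-preservation at each substitution: in the first step this is the $\mathbb{Q}$-linear independence of the chosen basis, and in the second it is the standard base-$K$ injectivity for bounded digits. Neither raises a genuinely new difficulty, so the whole problem reduces cleanly to Schinzel's integer-exponent lemma.
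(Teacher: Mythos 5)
Your proposal is correct, and its first half coincides with the paper's argument: both linearize the exponents over $\mathbb{Q}$ by choosing a basis of the rational span of $s_1,\ldots,s_l$, clear denominators, and pass to an integer-exponent polynomial in several variables whose term count (and that of its $\lambda$th power) matches $\phi$'s by $\mathbb{Q}$-linear independence. Where you diverge is the descent from several variables to one. The paper specializes all variables but one to the value $1$, obtaining $\psi'(Y_{j'})=\psi(1,\ldots,Y_{j'},\ldots,1)=\sum_i c_iY_{j'}^{a_{ij'}}$, and applies Schinzel's lemma to that; this is shorter but relies on the specialized polynomial still having at least two terms, which is delicate because setting the other variables to $1$ can merge monomials with distinct multi-exponents and even cancel coefficients (the paper's phrase ``$\psi$ as a polynomial in $Y_{j'}$ has at least two terms'' refers to grouping by powers of $Y_{j'}$ with polynomial coefficients, which is not quite the same as the specialization having two terms). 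You instead use a Kronecker-type substitution $y_k=w^{K^{k-1}}$ with $K$ exceeding twice the largest coordinate of any multi-exponent occurring in $\tilde{\phi}^\lambda$; the base-$K$ injectivity argument then guarantees that \emph{no} monomials collapse, so the term counts of both $\tilde{\phi}$ and $\tilde{\phi}^\lambda$ are preserved exactly, and the final shift by a power of $w$ to remove negative exponents is harmless. This costs you the small bookkeeping of bounding the exponents and choosing $K$, but it buys an unconditionally term-preserving reduction that sidesteps the cancellation issue in the specialization step; the rest (invoking Lemma~\ref{le:erdos} and chaining the equalities back to $\phi^\lambda$) is sound.
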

\begin{proof}Let $\phi(x)=c_1x^{s_1}+\ldots+c_lx^{s_l}$ with $c_i\in\mathbb{R}\backslash\{0\}$ and $s_i\in\mathbb{R}^+$. Considering $\mathbb{R}$ as a vector space over the rationals $\mathbb{Q}$, choose a basis $r_1,\ldots,r_\tau>0$ for the space spanned by $s_1,\ldots,s_l$ where $\tau\leq l$. Therefore
\[
s_i=\sum_{j=1}^\tau a_{ij}r_j \ \ \mathrm{for}\ \ i=1,\ldots,l
\]
where $a_{ij}\in\mathbb{Q}$. We may assume that $a_{ij}\in\mathbb{Z}$ by adjusting the $r_j$ suitably. We note that for any positive real number $N$, $\phi^\lambda$ has the same number of terms as $(x^N\phi)^\lambda$. So by choosing $N=b_1r_1+\ldots+b_\tau r_\tau$ with integers $b_j>|a_{ij}|$ for $i=1,\ldots,l$ and $j=1,\ldots,\tau$, we may also assume that each $a_{ij}r_j>0$ hence $a_{ij}\in\mathbb{N}$. We then obtain
\[
\phi(x)=\sum_{i=1}^lc_ix^{s_i}=\sum_{i=1}^lc_i(x^{r_1})^{a_{i1}}\ldots(x^{r_\tau})^{a_{i\tau}}.
\]
We define a polynomial $\psi$ in $\tau$ variables by
\[
\psi(Y_1,\ldots,Y_\tau)=\sum_{i=1}^lc_iY_1^{a_{i1}}\ldots Y_\tau^{a_{i\tau}}.
\]
 Define $\Phi$ to be the collection of monomial terms in $\phi^\lambda$ after reduction and cancelation, and $\Psi$ similarly for $\psi^\lambda$. Hence our goal is to prove that $\mathrm{card}\Phi\geq~\lambda+~1$. Since both $\phi$ and $\psi$ each have $l$ distinct monomial terms, the total number of possible products while computing $\phi^\lambda$ or $\psi^\lambda$ is $l^\lambda$.

 We claim that whenever two such products $p(x)=k.(x^{r_1})^{m_1}\ldots(x^{r_\tau})^{m_\tau}$ and $q(x)=k'.(x^{r_1})^{m_1'}\ldots(x^{r_\tau})^{m_\tau'}$ reduce (respectively cancel) in $\phi^\lambda$, the corresponding products $p_\psi(Y_1,\ldots,Y_\tau)=k.Y_1^{m_1}\ldots Y_\tau^{m_\tau}$ and $q_\psi(Y_1,\ldots,Y_\tau)=k'.Y_1^{m_1'}\ldots Y_\tau^{m_\tau'}$ also reduce (resp.\,cancel) in $\psi^\lambda$, where $m_j,m'_j\in\mathbb{N}$. Indeed, it is obvious that $p$ and $q$ combine (resp.\,cancel) if and only if $m_1r_1+\ldots+m_\tau r_\tau=m_1'r_1+\ldots+m_\tau' r_\tau$. Since $r_1,\ldots,r_\tau$ are linearly independent over $\mathbb{Q}$, this is possible if and only if $m_j=m_j'$ for $j=1,\ldots,\tau$. And this is equivalent to the reducing (resp.\,cancelling) of $p_\psi$ and $q_\psi$. This proves that $\mathrm{card}\Phi=\mathrm{card}\Psi$.

 Note that $\psi$ has at least two terms because $\phi$ has at least two terms. This implies that for some $1\leq j'\leq\tau$, $\psi$ as a polynomial in $Y_{j'}$ has at least two terms. Applying Lemma \ref{le:erdos} to $\psi'(Y_{j'}):=\psi(1,\ldots,Y_{j'},\ldots,1)=\sum_{i=1}^lc_iY_{j'}^{a_{ij'}}$, we see that $(\psi')^\lambda$ has at least $\lambda+1$ terms. Therefore $\psi^\lambda$ has at least $\lambda+1$ terms and $\mathrm{card}\Psi\geq\lambda+1$. Therefore $\mathrm{card}\Phi\geq\lambda+1$.
\end{proof}
The next lemma is a consequence of formula \eqref{eq:minimal2}.
\begin{lem}\label{lem:polynomial lemma}Let $\Lambda=(\lambda_k)_k$ and $\sum_k\frac{1}{\lambda_k}<\infty$. If a real-exponent polynomial $c_1x^{s_1}+\ldots+c_lx^{s_l}$ belongs to $M^p_\Lambda$ , then $s_1,\ldots,s_l\in\Lambda$.
\end{lem}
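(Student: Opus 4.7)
The plan is to reduce to the minimality formula \eqref{eq:minimal2}. Write $f = c_1 x^{s_1}+\cdots+c_lx^{s_l} \in M^p_\Lambda$ with all $c_i \neq 0$ and distinct exponents $s_i$, and split the index set $S=\{s_1,\dots,s_l\}$ into $S_0 = S\cap \Lambda$ and $S_1 = S\setminus \Lambda$. Correspondingly, write $f = g+h$, where $g$ collects the terms with $s_i\in S_0$ and $h$ those with $s_i\in S_1$. The goal is to show $S_1=\emptyset$.

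First I would observe that $g$ is automatically in $M^p_\Lambda$, since each monomial appearing in $g$ has exponent in $\Lambda$; hence $h = f-g$ also lies in $M^p_\Lambda$. The key trick is then to enlarge the sequence: set $\widetilde\Lambda = \Lambda\cup S_1$, reordered increasingly. Because $S_1$ is finite, $\widetilde\Lambda$ still satisfies $\sum_{\lambda\in\widetilde\Lambda}1/\lambda<\infty$, so the minimality property \eqref{eq:minimal2} applies to $\widetilde\Lambda$ with the finite subset $\Lambda' := S_1$. Since $\widetilde\Lambda\setminus S_1 = \Lambda$ (by construction $S_1$ is disjoint from $\Lambda$), this gives
\[
L_{S_1}\cap L_{\Lambda} = L_{\Lambda'}\cap L_{\widetilde\Lambda\setminus\Lambda'} = \{0\}.
\]

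Now $h\in L_{S_1}$ by definition, and $h\in M^p_\Lambda = L_\Lambda$ by the first step, so $h=0$. But the distinct real powers $x^{s_i}$ for $s_i\in S_1$ are linearly independent in $L^p$ (e.g., as their exponents differ), and each $c_i\neq 0$, so $h=0$ forces $S_1=\emptyset$. Thus every $s_i$ belongs to $\Lambda$.

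The only subtle point is verifying that \eqref{eq:minimal2} remains valid for the enlarged sequence $\widetilde\Lambda$; this is immediate since that formula is stated in the excerpt for any sequence with summable reciprocals, and $\widetilde\Lambda$ is obtained from $\Lambda$ by adjoining finitely many positive exponents. Everything else is essentially bookkeeping of where the monomial exponents sit.
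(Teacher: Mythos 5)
Your proof is correct and follows essentially the same route as the paper: both split the polynomial into its $\Lambda$-part and its complementary part, observe the latter lies in $L_\Lambda$, and then invoke the minimality relation \eqref{eq:minimal2} applied to the sequence enlarged by the finitely many offending exponents. Your write-up merely makes explicit the step (applying \eqref{eq:minimal2} to $\widetilde\Lambda=\Lambda\cup S_1$ with $\Lambda'=S_1$) that the paper leaves implicit when it asserts $L_{\Lambda'}\cap L_\Lambda=\{0\}$.
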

\begin{proof} Given $c_1x^{s_1}+\ldots+c_lx^{s_l}\in M^p_\Lambda=L_\Lambda$, suppose on the contrary that some subset $\Lambda'=\{s_{k_1},\ldots,s_{k_m}\}\subset\{s_1,\ldots,s_l\}$ does not belong to $\Lambda$ and $\{s_1,\ldots,s_l\}\backslash\Lambda'\subset\Lambda$. Then
\[
p(x)=(c_1x^{s_1}+\ldots+c_lx^{s_l})-(c_{k_1}x^{s_{k_1}}+\ldots+c_{k_m}x^{s_{k_m}})\in L_{\Lambda}
.\]
This implies that $c_{k_1}x^{s_{k_1}}+\ldots+c_{k_m}x^{s_{k_m}}=c_1x^{s_1}+\ldots+c_lx^{s_l}-p(x)\in L_{\Lambda'}\cap L_\Lambda$. But $L_{\Lambda'}\cap L_{\Lambda}=\{0\}$ by \eqref{eq:minimal2}, a contradiction.
\end{proof}

\begin{thm}\label{th:non-invariance}Suppose $\Lambda=(\lambda_k)_k\subset\mathbb{N}$ with $\sum_k\frac{1}{\lambda_k}<\infty$. If $\phi$ is a real-exponent polynomial with more than one term, then $C_\phi M^p_\Lambda\nsubseteq M^p_\Lambda$.
\end{thm}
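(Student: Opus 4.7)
The plan is to exhibit an explicit function in $M^p_\Lambda$ whose image under $C_\phi$ leaves the space. The natural candidate is a single monomial $x^\lambda$ with $\lambda\in\Lambda$ large: under $C_\phi$ it becomes $\phi^\lambda$, which is itself a real-exponent polynomial, and the two preceding lemmas let me control both \emph{how many} terms it contains (Lemma~\ref{le:real erdos}) and \emph{where} those exponents must live if $\phi^\lambda$ were to belong to $M^p_\Lambda$ (Lemma~\ref{lem:polynomial lemma}).

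Concretely, write $\phi(x)=c_1x^{s_1}+\cdots+c_lx^{s_l}$ with $l\geq 2$ and set $s=\max_i s_i$. For any $\lambda\in\mathbb{N}$, the exponents appearing in $\phi^\lambda$ lie in $[0,s\lambda]$, and by Lemma~\ref{le:real erdos} there are at least $\lambda+1$ distinct ones. If $\phi^\lambda\in M^p_\Lambda$, then Lemma~\ref{lem:polynomial lemma} forces each of these exponents to belong to $\Lambda$, so the counting function $n(N):=\#\{k:\lambda_k\leq N\}$ must satisfy $n(s\lambda)\geq \lambda+1$. Since $\lambda\in\Lambda$ can be taken arbitrarily large, this yields $n(N)/N\geq 1/s$ for an unbounded sequence of values $N=s\lambda$.

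To close the argument I need the density statement that $n(N)/N\to 0$ under the M\"{u}ntz condition $\sum_k 1/\lambda_k<\infty$. If instead $n(N_j)/N_j\geq c>0$ along some $N_j\to\infty$, then because $\lambda_k\leq N_j$ for $k\leq n(N_j)$, the partial tail
\[
\sum_{k=\lceil n(N_j)/2\rceil}^{n(N_j)}\frac{1}{\lambda_k}\;\geq\;\frac{n(N_j)/2}{N_j}\;\geq\;\frac{c}{2}
\]
would stay bounded below, contradicting the convergence of $\sum 1/\lambda_k$ (the lower index $\lceil n(N_j)/2\rceil\to\infty$, so this is a genuine tail). Combining this with the lower bound from the preceding paragraph gives the required contradiction, so $\phi^\lambda\notin M^p_\Lambda$ for all sufficiently large $\lambda\in\Lambda$, proving $C_\phi M^p_\Lambda\nsubseteq M^p_\Lambda$.

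The main obstacle I expect is simply the density lemma $n(N)/N\to 0$; the rest is a direct bookkeeping application of Lemmas~\ref{le:real erdos} and~\ref{lem:polynomial lemma}. One subtlety worth noting is that the hypothesis $\Lambda\subset\mathbb{N}$ is used twice: implicitly, through Lemma~\ref{le:real erdos}, which needs $\lambda\in\mathbb{N}$ for Schinzel's estimate to apply; and to ensure such integer $\lambda$ are available in $\Lambda$ to serve as the test exponent.
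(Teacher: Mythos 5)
Your proposal is correct and follows essentially the same route as the paper: apply Lemma~\ref{le:real erdos} to obtain at least $\lambda+1$ distinct exponents in $\phi^\lambda$, force them all into $\Lambda$ via Lemma~\ref{lem:polynomial lemma}, and contradict $\sum_k 1/\lambda_k<\infty$ by counting how many elements of $\Lambda$ this places below $s\lambda$. The only (cosmetic) difference is in the final count: the paper sums over disjoint exponent blocks attached to a sparse subsequence of $\Lambda$, each block contributing at least $1/s_l$ to the series, whereas you phrase the same estimate as the density statement $n(N)/N\to 0$.
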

\begin{proof}Let $\phi(x)=c_1x^{s_1}+\ldots+c_lx^{s_l}$ with $c_i\in\mathbb{R}\backslash\{0\}$ and $s_i\in\mathbb{R}^+$. Then for any $\lambda\in\Lambda$, we get $C_\phi(x^\lambda)=\phi^\lambda$ which has at least $\lambda+1$ terms by Lemma \ref{le:real erdos}. We may assume that these $\lambda+1$ terms are nonzero multiples of
\[
x^{s_1\lambda},x^{t_1},\ldots,x^{t_{\lambda-1}},x^{s_l\lambda} \ \ \mathrm{where} \ \ s_1\lambda<t_1<\ldots<t_{\lambda-1}<s_l\lambda
.\]
Suppose that $C_\phi M^p_\Lambda \subset M^p_\Lambda$, then Theorem \ref{lem:polynomial lemma} gives us $s_1\lambda,t_1,\ldots,t_{\lambda-1},s_l\lambda\in\Lambda$. We construct a subsequence $(\lambda_{k_j})_j$ of $\Lambda$ as follows: Let $\lambda_{k_1}=\lambda_1$ and inductively choose $\lambda_{k_j}$ such that $s_1\lambda_{k_j}>s_l\lambda_{k_{j-1}}$ for $j\geq2$. Then the sequence
\[
\Lambda^*:=\bigcup_{j=1}^\infty\{s_1\lambda_{k_j},t_1,\ldots,t_{\lambda_{k_j}-1},s_l\lambda_{k_j}\}
\]
is increasing and has distinct elements; moreover, $\Lambda^*\subset\Lambda$. So
\[
\sum_{k=1}^\infty\frac{1}{\lambda_k}\geq\sum_{s\in\Lambda^*}\frac{1}{s}\geq
\sum_{j=1}^\infty\sum_{i=1}^{\lambda_{k_j}+1}\frac{1}{s_l\lambda_{k_j}}\geq\sum_{j=1}^\infty\frac{1}{s_l}=\infty
\]
and hence the contradiction implies $C_\phi M^p_\Lambda\nsubseteq M^p_\Lambda$.
\end{proof}

\begin{cor}\label{cor:invariance}Let $\Lambda\subset\mathbb{N}$ and $\phi$ be a real-exponent polynomial. Then the following are equivalent:\\
$(i)$ $C_\phi M^p_\Lambda\subset M^p_\Lambda$\\
$(ii)$ $\phi(x)=\alpha x^{\eta}$ and $\Lambda=\Lambda.\{1,\eta,\eta^2,\ldots\}$ for some $0\leq\alpha\leq 1$ and $\eta\in\mathbb{R}^+$\\
$(iii)$ $C_\phi:M^p_\Lambda\to M^p_\Lambda$ is a bounded operator.
\end{cor}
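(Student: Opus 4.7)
My plan is to close the triangle (iii) $\Rightarrow$ (i) $\Rightarrow$ (ii) $\Rightarrow$ (iii). The implication (iii) $\Rightarrow$ (i) is immediate, since a bounded operator $C_\phi : M^p_\Lambda \to M^p_\Lambda$ has by definition its range contained in $M^p_\Lambda$.

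For (i) $\Rightarrow$ (ii), I would first invoke Theorem \ref{th:non-invariance}, whose contrapositive forbids $\phi$ from having more than one term under the hypothesis $C_\phi M^p_\Lambda \subset M^p_\Lambda$. Hence $\phi(x) = \alpha x^\eta$ for some $\alpha \in \mathbb{R}$ and $\eta \geq 0$, and the constraint $\phi([0,1]) \subset [0,1]$ forces $0 \leq \alpha \leq 1$. Assuming $\alpha > 0$, applying $C_\phi$ to a monomial gives $C_\phi(x^\lambda) = \alpha^\lambda x^{\eta\lambda}$, a nonzero real-exponent polynomial lying in $M^p_\Lambda$, so Lemma \ref{lem:polynomial lemma} yields $\eta\lambda \in \Lambda$ for every $\lambda \in \Lambda$. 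Iterating gives $\eta^k \Lambda \subset \Lambda$ for all $k \geq 0$, which together with the trivial reverse inclusion is precisely $\Lambda = \Lambda \cdot \{1, \eta, \eta^2, \ldots\}$. The degenerate case $\alpha = 0$ is already compatible with (ii), e.g.\ with $\eta = 1$.

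For (ii) $\Rightarrow$ (iii), I set $\mu := \phi^\ast m$ and compute its density by a direct change of variables. For $\alpha > 0$ the measure has Lebesgue density $\frac{1}{\eta}\alpha^{-1/\eta}\, t^{1/\eta - 1}$ on $[0, \alpha]$ and vanishes on $(\alpha, 1]$, while for $\alpha = 0$ one has $\mu = \delta_0$. In every case $\mu$ admits a bounded Lebesgue density on some left neighborhood $J_\delta$ of $1$: either identically zero (when $\alpha < 1$, once $1 - \delta > \alpha$), or $h(t) = \frac{1}{\eta} t^{1/\eta - 1}$ (when $\alpha = 1$), which is continuous at $t = 1$ with limit $1/\eta$. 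Proposition \ref{co:AbsCont h} then yields that $i^p_\mu$ is bounded, and Lemma \ref{le:4parts}(i) upgrades this to boundedness of $C_\phi : M^p_\Lambda \to L^p$. Finally, because $\eta \Lambda \subset \Lambda$ we have $C_\phi(x^\lambda) = \alpha^\lambda x^{\eta\lambda} \in M^p_\Lambda$ for every $\lambda \in \Lambda$, and continuity together with density of the span of $\{x^\lambda : \lambda \in \Lambda\}$ in $M^p_\Lambda$ extends the inclusion to the full space, giving (iii).

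The only point requiring real attention is the behavior of the density $h(t) = \frac{1}{\eta} t^{1/\eta - 1}$ near $t = 1$ in the subcase $\alpha = 1$: for every $\eta > 0$ it is continuous and bounded on any left neighborhood of $1$, so Proposition \ref{co:AbsCont h} applies uniformly in $\eta$ (note that even for $\eta > 1$, where the density blows up at $t=0$, no problem arises since only the behavior near $t=1$ matters). Beyond this small verification, the proof is a clean assembly of Theorem \ref{th:non-invariance}, Lemma \ref{lem:polynomial lemma}, Proposition \ref{co:AbsCont h}, and Lemma \ref{le:4parts}, with no significant further obstacle.
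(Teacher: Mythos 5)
Your proposal is correct and follows essentially the same route as the paper: the cycle $(iii)\Rightarrow(i)\Rightarrow(ii)\Rightarrow(iii)$ using Theorem \ref{th:non-invariance} and Lemma \ref{lem:polynomial lemma} for $(i)\Rightarrow(ii)$, and the pullback measure $\mu=\phi^\ast m$ with Proposition \ref{co:AbsCont h} and Lemma \ref{le:4parts} for $(ii)\Rightarrow(iii)$. Your explicit treatment of the degenerate case $\alpha=0$ and the full density computation are minor refinements of the paper's argument, not a different approach.
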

\begin{proof}$(i)\Rightarrow(ii)$. Theorem \ref{th:non-invariance} implies that $\phi(x)=\alpha x^{\eta}$ for $\eta\in\mathbb{R}^+$ and $0\leq\alpha\leq 1$ because $\phi([0,1])\subset[0,1]$. Then $C_\phi^m(x^\lambda)=C_\phi^{m-1}(\alpha^\lambda x^{\lambda\eta})=\ldots=Kx^{\lambda\eta^m}\in M^p_\Lambda$ for any $\lambda\in\Lambda$, $m\in\mathbb{N}$ and some constant $K$. Hence $\lambda\eta^m\in\Lambda$ for all $\lambda\in\Lambda$ and $m\in\mathbb{N}$ by Lemma \ref{lem:polynomial lemma}. Therefore $\Lambda=\cup_{\lambda\in\Lambda}\lambda.\{1,\eta,\eta^2,\ldots\}=\Lambda.\{1,\eta,\eta^2,\ldots\}$.

$(ii)\Rightarrow(iii)$. Suppose $\phi(x)=\alpha x^{\eta}$ with $0\leq\alpha\leq 1$ and $\eta\in\mathbb{R}^+$. If $\alpha<1$, then $\mu=\phi^*m$ is supported on $[0,\alpha]$ and $d\mu|_{J_{1-\alpha}}=0$. Hence $||i_\mu^p||_e=0$ by Proposition \ref{co:AbsCont h} and $C_\phi=J\circ i_\mu^p$ is compact. For $\alpha=1$, the measure $\mu=\phi^*m$ satisfies
\[
\int_{J_\delta}fd\mu=\int_{\phi^{-1}(J_\delta)}f\circ\phi\,dm=\int_{J_\delta} f\cdot(\phi^{-1})'dm
=\int_{1-\delta}^1f(x)\,\eta^{-1}x^{\frac{1}{\eta}-1}dx
\]
for any continuous $f$ and $0<\delta<1$. Therefore $d\mu|_{J_\delta}=h\,dm|_{J_\delta}$ where $h(x)=\eta^{-1}x^{\frac{1}{\eta}-1}$ is bounded on $J_\delta$, and hence $C_\phi$ is bounded by Proposition \ref{co:AbsCont h}. Moreover, for any $\lambda\in\Lambda$ we see that $C_\phi x^\lambda=\alpha^\lambda x^{\lambda\eta}\in M^p_\Lambda$. Hence by the density of linear span of monomials $x^\lambda$ in $M^p_\Lambda$ and continuity of $C_\phi$, we get $C_\phi M^p_\Lambda\subset M^p_\Lambda$. The last part $(iii)\Rightarrow(i)$ is trivial.
\end{proof}

It is easy to see that Theorem \ref{th:non-invariance} and Corollary \ref{cor:invariance} can be extended to the case when $\Lambda\nsubseteq\mathbb{N}$, but contains a subsequence of integers. To go beyond this case, we need some preparation about \emph{real-exponent power series}.

\begin{lem}\label{lem:real power series}Suppose $f(x)=\sum_ka_kx^{s_k}$ is a series such that $(s_k)_k\subset\mathbb{R}^+$ is the finite union of sequences that satisfy the gap condition. Then $f$ is uniformly convergent on some interval $[0,\rho]$ if $L:=\limsup_k|a_k|^{1/s_k}<\infty$. Furthermore, if $f\equiv0$ on $[0,\rho_0]$ for $\rho_0\leq\rho$ then $a_k=0$ for all $k$.
\end{lem}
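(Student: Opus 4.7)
The plan has two essentially independent parts, exploiting the finite decomposition in different ways.

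For the uniform convergence assertion, I would decompose $(s_k)$ as the union of $N$ increasing sequences $(s^{(i)}_k)_{k\geq 1}$, $i=1,\dots,N$, each with $\inf_k(s^{(i)}_{k+1}-s^{(i)}_k)\geq\delta_i>0$. Fix any $L'>L$; then $|a_k|\leq (L')^{s_k}$ for all sufficiently large $k$. Choose $\rho>0$ so that $L'\rho<1$ (any $\rho>0$ if $L=0$). Then on $[0,\rho]$ we have $|a_k x^{s_k}|\leq(L'\rho)^{s_k}$. Within the $i$-th subsequence, $s^{(i)}_k\geq s^{(i)}_1+(k-1)\delta_i$, so $\sum_k(L'\rho)^{s^{(i)}_k}$ is dominated by a convergent geometric series in the ratio $(L'\rho)^{\delta_i}$. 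Summing the $N$ resulting convergent majorants and applying the Weierstrass M-test gives uniform convergence on $[0,\rho]$.

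For the identity statement, suppose toward contradiction that some $a_k\neq 0$. The decisive structural observation is that even though the union of gap-sequences need not itself have a global gap, the set $\{s_k\}\subset\mathbb{R}^+$ is \emph{discrete}: each subsequence intersects any bounded interval in finitely many points, and there are only $N$ subsequences. Hence, after collecting identical exponents (each exponent is shared by at most $N$ indices $k$), I can list the distinct exponents with nonzero combined coefficient as $t_0<t_1<\cdots$ with coefficients $b_0,b_1,\dots$ and $b_0\neq 0$. Factoring,
\[
f(x)=x^{t_0}\Bigl(b_0+\sum_{j\geq 1}b_j x^{t_j-t_0}\Bigr)=:x^{t_0}\bigl(b_0+g(x)\bigr).
\]
The shifted exponent sequence $(t_j-t_0)_{j\geq1}$ still decomposes into finitely many gap-sequences, and $\limsup|b_j|^{1/(t_j-t_0)}<\infty$ (the shift only divides the exponent by a bounded factor, for large $j$), so part one applies to $g$, giving uniform convergence on some $[0,\rho']$ together with continuity at $0$ and $g(0)=0$. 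For $0<x<\min(\rho_0,\rho')$ we have $f(x)=0$ and $x^{t_0}>0$, forcing $b_0+g(x)=0$; letting $x\to 0^+$ yields $b_0=0$, a contradiction. Consequently every combined coefficient vanishes, and therefore every $a_k=0$.

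The main delicate point is handling the fact that the ambient exponent set $(s_k)$ need not satisfy a single global gap condition. Everything hinges on two consequences of the finite decomposition: a geometric majorant for Weierstrass (obtained subsequence by subsequence), and discreteness of the exponent set (needed to isolate a smallest exponent with nonzero coefficient). The shifted series $g$ must also inherit the same structural hypothesis, which it does because shifting all exponents by a constant preserves the gap condition on each subsequence and preserves finiteness of $\limsup|a_k|^{1/s_k}$ up to a harmless change of constant.
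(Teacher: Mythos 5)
Your proof is correct and follows essentially the same route as the paper's: a geometric majorant obtained subsequence-by-subsequence (the paper phrases it via the root/ratio test with $\limsup_k|a_kx^{s_k}|^{1/s_k}<1$ and $\sum_k r^{s_k}<\infty$ from the gap condition) for uniform convergence, and for the identity statement a factoring-out of the least exponent with nonzero coefficient followed by continuity at $0$ of the shifted series. You are in fact slightly more careful than the paper in justifying that a least such exponent exists (discreteness of the exponent set) and in allowing repeated exponents, though note that in the genuinely repeated case the vanishing of a combined coefficient $b_j$ does not force the individual $a_k$ with $s_k=t_j$ to vanish, so the final clause ``therefore every $a_k=0$'' tacitly requires the $s_k$ to be distinct --- an assumption the lemma's statement and the paper's own proof both make implicitly.
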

\begin{proof}It is sufficient to prove the first part for the case when $(s_k)_k$ itself satisfies the gap condition; in the general case, we can write $f$ as a finite sum of uniformly convergent series.

Since $|a_kx^{s_k}|^{1/s_k}=|a_k|^{1/s_k}|x|$, we get $\limsup_k|a_kx^{s_k}|^{1/s_k}<1$ if and only if
$|x|<L^{-1}$ (taking $L^{-1}=\infty$ if $L=0$). So, for $L|x|<1$, we get $\limsup_k|a_kx^{s_k}|^{1/s_k}<r<1$ for some $r$ and hence there exists a positive integer $N$ such that $|a_kx^{s_k}|^{1/s_k}<r$ for $k\geq N$. Therefore
\[
\sum_{k\geq N}^\infty|a_kx^{s_k}|\leq\sum_{k\geq N}^\infty r^{s_k}<\infty
\]
where the convergence follows from the ratio test and the gap condition because
\[
\lim_{k\to\infty}\frac{r^{s_{k+1}}}{r^{s_k}}=\lim_{k\to\infty}r^{s_{k+1}-s_k}\leq r^{\inf_k(s_{k+1}-s_k)}<1.
\]
So $f(x)$ converges absolutely for $L|x|<1$, and in particular converges uniformly on $[0,\rho]$ for some $\rho>0$.

For the second part, suppose on the contrary that $a_1$ is the first non-zero coefficient. We see that
\[
f(x)=\sum_{k\geq1} a_kx^{s_k}=a_1x^{s_1}(1+\sum_{k>1}\frac{a_k}{a_1}\,x^{s_k-s_1})
\]
where $(s_k-s_1)_k$ is again a union of finitely many series satisfying the gap condition and
\begin{align*}
\limsup_k\left|\frac{a_k}{a_1}\right|^{\frac{1}{s_k-s_1}}&
\leq\limsup_k(|a_k|^{\frac{1}{s_k}})^{\frac{s_k}{s_k-s_1}}.\limsup_k\left(\frac{1}{|a_1|}\right)^{\frac{1}{s_k-s_1}}=L<\infty
\end{align*}
hence $g(x)=1+\sum_{k>1}\frac{a_k}{a_1}\,x^{s_k-s_1}$ converges uniformly on some interval $[0,\rho_1]$. So $f(x)=a_1x^{s_1}g(x)=0$ on $[0,r]$, where $r=\min\{\rho_0,\rho_1\}$. Therefore $g=0$ on $(0,r]$ and hence on $[0,r]$ by continuity. A contradiction, since $g(0)=1$.
\end{proof}
\begin{thm}\label{th:gap-invariance}Suppose $\Lambda\subset\mathbb{R}^+$ with $\sum_k\frac{1}{\lambda_k}<\infty$ satisfies the gap condition $\inf_k(\lambda_{k+1}-\lambda_k)>0$. If $\phi=\alpha x^{\zeta_1}+\beta x^{\zeta_2}$ with $\alpha,\beta\neq0$ and $\zeta_1<\zeta_2\in\mathbb{R}^+$, then $C_\phi M^p_\Lambda\nsubseteq M^p_\Lambda$.
\end{thm}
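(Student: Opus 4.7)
The plan is to argue by contradiction, comparing two series representations of $\phi^{\lambda_0}$ for a well-chosen $\lambda_0 \in \Lambda$. If $\Lambda \subset \mathbb{N}$, the conclusion follows immediately from Theorem \ref{th:non-invariance}, so we may assume that some $\lambda_0 \in \Lambda$ satisfies $\lambda_0 \notin \mathbb{N} \cup \{0\}$. Suppose for contradiction that $C_\phi M^p_\Lambda \subset M^p_\Lambda$; then in particular $\phi^{\lambda_0} = C_\phi(x^{\lambda_0}) \in M^p_\Lambda$.

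Factoring $\phi(x) = \alpha x^{\zeta_1}\bigl(1 + (\beta/\alpha)\,x^{\zeta_2 - \zeta_1}\bigr)$ and applying the generalised binomial theorem on a small interval $[0, \rho_1]$ yields
\[
\phi^{\lambda_0}(x) = \sum_{k \geq 0} c_k\, x^{\zeta_1 \lambda_0 + k(\zeta_2 - \zeta_1)}, \qquad c_k = \binom{\lambda_0}{k}\,\alpha^{\lambda_0 - k}\,\beta^k.
\]
Every $c_k$ is non-zero: $\lambda_0 \notin \mathbb{N} \cup \{0\}$ guarantees $\binom{\lambda_0}{k} \neq 0$ for every $k \geq 0$, and $\alpha, \beta \neq 0$ by hypothesis. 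Simultaneously, the gap condition on $\Lambda$ lets us invoke Theorem \ref{th:Cl-Erd} to expand $\phi^{\lambda_0}(x) = \sum_n b_n\, x^{\lambda_n}$ with uniform convergence on compact subsets of $[0,1)$.

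The two representations agree on a neighbourhood of $0$; their difference is a real-exponent series whose exponents lie in the union of the arithmetic progression $\{\zeta_1 \lambda_0 + k(\zeta_2 - \zeta_1) : k \geq 0\}$ (which has positive common gap $\zeta_2 - \zeta_1$) and the sequence $(\lambda_n)$ (gap condition by hypothesis). This is a finite union of gap-condition sequences, so the uniqueness part of Lemma \ref{lem:real power series} forces every coefficient of the difference to vanish. Since $c_k \neq 0$ for every $k$, each exponent $\zeta_1 \lambda_0 + k(\zeta_2 - \zeta_1)$ must coincide with some $\lambda_{n(k)} \in \Lambda$, so that the whole infinite arithmetic progression lies inside $\Lambda$. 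This yields
\[
\sum_n \frac{1}{\lambda_n} \geq \sum_{k \geq 0} \frac{1}{\zeta_1 \lambda_0 + k(\zeta_2 - \zeta_1)} = \infty,
\]
contradicting the M\"{u}ntz condition $\sum_n 1/\lambda_n < \infty$.

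The main obstacle is the uniqueness step: the binomial and Clarkson--Erd\"{o}s exponent sequences can interlace freely, and one must exclude any miraculous cancellation between them. This is precisely what Lemma \ref{lem:real power series} is designed for, provided one verifies that the combined exponent set is a finite union of gap-condition sequences—which is straightforward here since both contributing sequences individually satisfy the gap condition.
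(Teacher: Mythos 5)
Your proposal is correct and follows essentially the same route as the paper: reduce to a non-integer $\lambda_0\in\Lambda$, expand $\phi^{\lambda_0}$ both by the binomial series and by the Clarkson--Erd\"{o}s theorem, and use the uniqueness part of Lemma \ref{lem:real power series} to force the arithmetic progression $\zeta_1\lambda_0+k(\zeta_2-\zeta_1)$ into $\Lambda$, contradicting $\sum_k 1/\lambda_k<\infty$. The only step you leave implicit is the verification of the coefficient-growth hypothesis $\limsup_k|\gamma_k|^{1/s_k}<\infty$ of Lemma \ref{lem:real power series} for the combined difference series, which the paper checks explicitly (via the estimate \eqref{eq:Clark-Erdos} for the Clarkson--Erd\"{o}s coefficients and a direct computation for the binomial ones); this is routine and does not affect the validity of your argument.
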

\begin{proof} If $\Lambda\subset\mathbb{N}$, then Theorem \ref{th:non-invariance} proves the result. So we assume $\Lambda\nsubseteq\mathbb{N}$, hence there exists $\lambda\in\Lambda$ that is not an integer. Suppose that $C_\phi M^p_\Lambda\subset M^p_\Lambda$; then
\[
C_\phi(x^{\lambda})=(\alpha x^{\zeta_1}+\beta x^{\zeta_2})^{\lambda}=\alpha^{\lambda}x^{\lambda \zeta_1}(1+\frac{\beta}{\alpha}\,x^{\zeta_2-\zeta_1})^{\lambda}\in M^p_\Lambda.
\]
Hence by the binomial series we can represent $C_\phi(x^{\lambda})$ as
\[
C_\phi(x^{\lambda})(t)=\alpha^{\lambda}t^{\lambda \zeta_1}\sum_{k=0}^\infty a_kt^{k(\zeta_2-\zeta_1)}
=\alpha^{\lambda}\sum_{k=0}^\infty a_k t^{\lambda \zeta_1+k(\zeta_2-\zeta_1)}
\]
where the series converges for $|t|<|\frac{\alpha}{\beta}|^{\frac{1}{\zeta_2-\zeta_1}}$, in particular on $[0,\eta]$ for some $\eta<1$. The sequence of exponents $(\lambda \zeta_1+k(\zeta_2-\zeta_1))_k$ clearly satisfies the gap condition, while the coefficients
\[
a_k=\left(\frac{\beta}{\alpha}\right)^k\frac{\lambda(\lambda-1)(\lambda-2)\ldots(\lambda-k+1)}{k!}
\]
satisfy
\[
L_1:=\limsup_{k\to\infty}|a_k|^{1/\lambda \zeta_1+k(\zeta_2-\zeta_1)}<\infty.
\]
Similarly, by Theorem \ref{th:Cl-Erd} there exists a sequence of scalars $b_k\in\mathbb{R}$ such that
\[
C_\phi(x^{\lambda})(t)=\sum_{k=1}^\infty b_kt^{\lambda_k}
\]
and the series converges uniformly on compact subsets of $[0,1)$. By \eqref{eq:Clark-Erdos}, the coefficients $(b_k)_k$ satisfy
\[
L_2:=\limsup_{k\to\infty}|b_k|^{1/\lambda_k}
\leq\limsup_{k\to\infty}\,[(1+\eps)(2\lambda_k+1)^{1/2\lambda_k}||f||_{L^2}^{1/\lambda_k}]<\infty.
\]
Since both series representations coincide on $[0,\eta]$, the series defined by
\[
f(t)=\sum_{k=1}^\infty b_kt^{\lambda_k}-\alpha^{\lambda}\sum_{k=0}^\infty a_k t^{\lambda \zeta_1+k(\zeta_2-\zeta_1)}
=\sum_k\gamma_kt^{s_k}
\]
vanishes on $[0,\eta]$. Since $(s_k)_k$ is the union of two series satisfying the gap condition and $\limsup_k|\gamma_k|^{1/s_k}\leq L_1+L_2<\infty$, by Lemma \ref{lem:real power series} we get $\gamma_k=0$ for all $k$. Since $\lambda$ is not an integer, all the $a_k$ are non-zero; this implies that $\lambda \zeta_1+k(\zeta_2-\zeta_1)\in\Lambda$ for all $k$.
This contradicts the fact that $\sum_k\frac{1}{\lambda_k}<\infty$ and hence $C_\phi M^p_\Lambda\nsubseteq M^p_\Lambda$.
\end{proof}
\section{Composition Operators on $M^2_\Lambda$: direct results}
The next result is essentially contained in the work of Chalendar, Fricain and Timotin \cite{Dan10}:

\begin{prop}\label{prop:Timotin}Suppose the Borel function $\phi:[0,1]\To[0,1]$ satisfies the following: \\
(a) $\phi^{-1}(1)=\{x_1,\ldots,x_k\}$ is finite.\\
(b) There exists $\epsilon>0$ such that, for each
$i=1,...,k$, $\phi$ is continuous on \\
$(x_i-\epsilon,x_i+\epsilon)$, $\phi\in
C^1((x_i-\epsilon,x_i))$ and
$\phi\in C^1((x_i,x_i+\epsilon))$.\\
(c) $\phi_-'(x_i)>0$ and $\phi'_+(x_i)<0$ for all $i=1,\dots,k$. \\
($\phi_-'(x)$ and $\phi_+'(x)$ denote the left and right derivatives at $x$ respectively, which may be infinite).\\
(d) There exists $\alpha<1$ such that, if
$x\notin\cup^k_{i=1}(x_i-\epsilon,x_i+\epsilon)$, then
$\phi(x)<\alpha$.

Then $ C_\phi:M^2_\Lambda\To L^2$ is bounded and $||C_\phi||_e=\sum_{i=1}^kL(x_i)$, where
\[L(x_i)=
\left\{
  \begin{array}{ll}
    \frac{1}{\phi_-'(x_i)}+\frac{1}{|\phi'_+(x_i)|}  \  \ if \ x_i\in(0,1), \\
    \frac{1}{\phi_-'(x_i)} \ \ \ \ \ \ \ \ \ \ \ \ \ \ if \ x_i=1, \\
    \frac{1}{|\phi'_+(x_i)|} \ \ \ \ \ \ \ \ \ \ \ \ \  if \ x_i=0.
  \end{array}
\right.
\]
In particular, if $\phi_-'(x_i)=\infty$ and $\phi'_+(x_i)=-\infty$ for all $i=1,\dots,k$, then $C_\phi$ is compact.
\end{prop}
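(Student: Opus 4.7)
The plan is to reduce everything to the embedding $i^2_\mu:M^2_\Lambda\hookrightarrow L^2(\mu)$ with $\mu=\phi^\ast m$, via Lemma \ref{le:4parts}. Since $J:L^2(\mu)\to L^2$ is an isometry, $\|C_\phi\|_e=\|i^2_\mu\|_e$, so it suffices to verify the hypotheses of Proposition \ref{co:AbsCont h} for $\mu$ near the point $1$.

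First I would shrink $\delta$ so that $\delta<1-\alpha$; by condition $(d)$ the preimage $\phi^{-1}(J_\delta)$ is then contained in $\bigcup_{i=1}^k(x_i-\epsilon,x_i+\epsilon)$. Conditions $(b)$ and $(c)$, together with $\phi(x_i)=1$, force $\phi$ to be a strictly increasing $C^1$ map on some $(x_i-\epsilon_i,x_i)$ onto an interval $(1-\delta_i^-,1)$, and a strictly decreasing $C^1$ map on $(x_i,x_i+\epsilon_i)$ onto $(1-\delta_i^+,1)$; I will denote the resulting one-sided inverses by $\psi_i^-$ and $\psi_i^+$. Applying the change of variables formula on each monotone branch separately and summing the contributions over all $i$, I obtain
\[
d\mu|_{J_\delta}=h(t)\,dm|_{J_\delta},
\]
where $h$ is a finite sum of terms of the form $1/\phi'(\psi_i^-(t))$ and $1/|\phi'(\psi_i^+(t))|$, with the appropriate term omitted when $x_i\in\{0,1\}$.

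The next step is to identify $\lim_{t\to 1}h(t)$. Since $\psi_i^\pm(t)\to x_i$ as $t\to 1$, and by the $C^1$ hypothesis $\phi'(\psi_i^-(t))\to\phi'_-(x_i)$ and $\phi'(\psi_i^+(t))\to\phi'_+(x_i)$, each summand tends to the corresponding contribution to $L(x_i)$; hence $\lim_{t\to 1}h(t)=\sum_{i=1}^k L(x_i)$. The same continuity gives boundedness of $h$ on $J_\delta$ for small $\delta$. Proposition \ref{co:AbsCont h} now applies and delivers both the boundedness of $i^2_\mu$ and the stated expression for its essential norm, which transfers to $C_\phi$ through the isometry $J$. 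The last assertion follows at once: if every $\phi'_\pm(x_i)$ is infinite then each $L(x_i)=0$, so $\|C_\phi\|_e=0$ and $C_\phi$ is compact.

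The main technical obstacle is ensuring that the local inverses $\psi_i^\pm$ exist as $C^1$ functions on a full one-sided neighborhood of $1$. This requires $\phi'$ to have constant sign on a suitably shrunk half-interval near $x_i$, which follows from $\phi'_\pm(x_i)\neq 0$ together with the continuity of $\phi'$ on $(x_i-\epsilon,x_i)$ and on $(x_i,x_i+\epsilon)$; for the infinite one-sided derivative case, $\psi_i^\pm$ is still well defined and continuous with $(\psi_i^\pm)'(t)\to 0$ as $t\to 1$, so the corresponding terms in $h$ simply vanish in the limit and the change of variables goes through unchanged.
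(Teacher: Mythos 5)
Your overall strategy is exactly the one the paper relies on: the paper gives no proof of this proposition (it attributes it to \cite{Dan10}), but the intended argument is precisely to set $\mu=\phi^\ast m$, use Lemma \ref{le:4parts} and the isometry $J$ to reduce to $i^2_\mu$, compute the density of $\mu$ near $1$ branch by branch via the local inverses of $\phi$, and invoke Proposition \ref{co:AbsCont h}. Your computation of $h$ and of $\lim_{t\to 1}h(t)=\sum_i L(x_i)$ is correct.

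There is, however, one concrete mismatch you gloss over. Proposition \ref{co:AbsCont h} gives $\|i^p_\mu\|_e=a^{1/p}$, so with $p=2$ and $a=\lim_{t\to1}h(t)=\sum_{i=1}^k L(x_i)$ your argument yields
\[
\|C_\phi\|_e=\|i^2_\mu\|_e=\Bigl(\sum_{i=1}^k L(x_i)\Bigr)^{1/2},
\]
not $\sum_{i=1}^k L(x_i)$ as you assert when you say the proposition ``delivers the stated expression.'' Either the statement as printed carries over the $p=1$ formula from \cite{Dan10} without the exponent $1/p$, or your identification is off by a square root; in any case you cannot claim both that $h\to\sum L(x_i)$ and that Proposition \ref{co:AbsCont h} gives $\|C_\phi\|_e=\sum L(x_i)$. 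You should either derive the square-root formula and flag the discrepancy, or explain why the exponent disappears. (The boundedness claim and the final compactness claim are unaffected, since $a<\infty$ and $a=0$ respectively are equivalent to $a^{1/2}<\infty$ and $a^{1/2}=0$.) A second, more minor point: your monotonicity argument uses that $\phi'$ has constant sign on a shrunk half-neighborhood of $x_i$, which you deduce from $\phi'_\pm(x_i)\neq 0$ plus continuity of $\phi'$ on the open half-intervals. If $\phi'_-(x_i)$ is read as the limit of difference quotients at $x_i$, this deduction fails ($\phi'$ may oscillate while the difference quotient converges), and one should instead estimate $m(\phi^{-1}(J_\delta))$ directly by sandwiching $1-\phi(x)$ between $(d\pm\eta)(x_i-x)$; if it is read as $\lim_{t\to x_i^-}\phi'(t)$, your argument is fine, but you should say which reading you are using.
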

 We intend to go beyond the regularity assumptions in Proposition \ref{prop:Timotin}.

\begin{defn}
If $\phi:[0,1]\to[0,1]$ is a Borel function and $\alpha=\mathrm{ess}\sup_{[0,1]}\phi$, then a point $x\in[0,1]$ is an \emph{essential point of maximum} for $\phi$ if ess $\sup_E\phi=\alpha$ for every neighborhood $E$ of $x$. Denote by $\mathfrak{M}_\phi$ the set of all essential points of maxima of $\phi$, and by $V_\eps$ the neighborhood of $\mathfrak{M}_\phi$ defined for each $\eps>0$ by
\[
V_\eps=\{x\in[0,1]:\mathrm{dist}(x,\mathfrak{M}_\phi)<\eps\}.
\]
\end{defn}

\begin{lem}\label{lem:V lemma}
The following statements are true:\\
$(i)$ $\mathfrak{M}_\phi$ is non-empty and closed,\\
$(ii)$ $\mathrm{ess}\sup\phi|_{[0,1]\backslash V_\eps}<\alpha$ for all $\eps>0$,\\
$(iii)$ for every $\eps>0$ there exists a $\delta_0>0$ such that $\phi^{-1}([\alpha-\delta,\alpha])\subset V_\eps$ almost everywhere whenever $0<\delta<\delta_0$.
\end{lem}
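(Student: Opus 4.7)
The plan is to handle the three parts in order, with the compactness of $[0,1]$ doing the main work, and part (iii) following from part (ii) by a simple essential‐supremum argument.

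For (i), I would examine the complement $G=[0,1]\setminus\mathfrak{M}_\phi$. By definition, $x\in G$ iff there is an open neighborhood $E_x$ of $x$ with $\mathrm{ess\,sup}_{E_x}\phi<\alpha$. Since $G$ is the union of these open $E_x$'s (each contained in $G$), it is open, so $\mathfrak{M}_\phi$ is closed. For non‑emptiness, suppose on the contrary $\mathfrak{M}_\phi=\emptyset$; then $\{E_x\}_{x\in[0,1]}$ is an open cover of the compact set $[0,1]$, so extract a finite subcover $E_{x_1},\ldots,E_{x_n}$. Because the essential supremum over a finite union equals the maximum of the essential suprema,
\[
\alpha=\mathrm{ess\,sup}_{[0,1]}\phi\;=\;\max_{1\le i\le n}\mathrm{ess\,sup}_{E_{x_i}}\phi\;<\;\alpha,
\]
a contradiction.

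For (ii), the set $K_\eps:=[0,1]\setminus V_\eps$ is a compact subset of $[0,1]$ disjoint from $\mathfrak{M}_\phi$. For each $x\in K_\eps$ choose an open neighborhood $E_x$ with $\mathrm{ess\,sup}_{E_x}\phi<\alpha$, and extract a finite subcover $E_{x_1},\ldots,E_{x_n}$ of $K_\eps$. The same observation as above yields
\[
\mathrm{ess\,sup}_{K_\eps}\phi\;\le\;\max_{1\le i\le n}\mathrm{ess\,sup}_{E_{x_i}}\phi\;<\;\alpha .
\]

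For (iii), set $\beta:=\mathrm{ess\,sup}_{K_\eps}\phi$, which by part (ii) satisfies $\beta<\alpha$. Put $\delta_0:=\alpha-\beta>0$. For any $0<\delta<\delta_0$ we have $\alpha-\delta>\beta$; by the very definition of the essential supremum, the set $\{x\in K_\eps:\phi(x)\ge\alpha-\delta\}$ has Lebesgue measure zero. Equivalently, $\phi^{-1}([\alpha-\delta,\alpha])\cap K_\eps$ is a null set, which is exactly the statement $\phi^{-1}([\alpha-\delta,\alpha])\subset V_\eps$ almost everywhere. The only place where real care is needed is the finite‑subcover step, where one has to remember that the essential supremum is subadditive/max‑additive over finite unions; once that is in hand, everything else is bookkeeping.
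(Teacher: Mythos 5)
Your proposal is correct and follows essentially the same route as the paper: the complement of $\mathfrak{M}_\phi$ is open as a union of neighborhoods with small essential supremum, non-emptiness and part (ii) come from compactness plus a finite subcover together with the fact that the essential supremum over a finite union is the maximum of the essential suprema, and (iii) follows by taking $\delta_0$ equal to the resulting gap below $\alpha$. The only cosmetic difference is that you prove (ii) directly while the paper argues by contradiction (reusing the argument of (i) on the compact set $[0,1]\setminus V_\eps$); the content is identical.
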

\begin{proof}$(i)$. If $\mathfrak{M}_\phi$ were empty, then every point $x\in[0,1]$ would have a neighborhood $\mathcal{N}_x$ such that ess $\sup_{\mathcal{N}_x}\phi<\alpha$ and all such $\mathcal{N}_x$ would cover $[0,1]$. Choosing a finite subcover so that $\cup_{k=1}^m\mathcal{N}_{x_k}=[0,1]$, we see that
\[\mathrm{ess}  \sup_{[0,1]}\phi=\max_k\{\mathrm{ess}  \sup_{\mathcal{N}_{x_k}}\phi\}<\alpha.\]
The contradiction yields $\mathfrak{M}_\phi\neq \varnothing$. To prove that $\mathfrak{M}_\phi$ is closed, consider
the set $\mathcal{S}:=\cup_{x\in[0,1]\backslash \mathfrak{M}_\phi}\mathcal{N}_x$, where $\mathcal{N}_x$ again represents a neighborhood of $x$ on which ess~$\sup_{\mathcal{N}_x}\phi<\alpha$.  So clearly $\mathcal{S}$ is  open, and $\mathcal{S}\cap\mathfrak{M}_\phi=\varnothing$ since otherwise some $\mathcal{N}_{x'}$ for $x'\in[0,1]\backslash \mathfrak{M}_\phi $ would contain an essential point of maximum. Hence $\mathcal{S}=[0,1]\backslash \mathfrak{M}_\phi$ and $\mathfrak{M}_\phi$ is closed.

For $(ii)$, suppose that $\mathrm{ess}\sup\phi|_{[0,1]\backslash V_{\eps'}}=\alpha$ for some $\eps'>0$. Then the argument in the proof of $(i)$ applied to the compact set $[0,1]\backslash V_{\eps'}$, shows that it contains an essential point of maximum.

Finally for $(iii)$, it follows from $(ii)$ that for every $\eps>0$ there exists a $\delta_0>0$ such that ess $\sup\phi|_{[0,1]\backslash V_\eps}<\alpha-\delta_0<\alpha$ and hence
\[\phi^{-1}([\alpha-\delta,\alpha])=\{x\in[0,1]:\alpha-\delta\leq \phi(x)\leq \alpha\}\subset V_\eps\] except possibly for a subset of measure $0$, whenever $0<\delta<\delta_0$.
\end{proof}

We recall that the left and right derivatives of $\phi$ at the point $y$ are defined as
\[\label{eq:D-}D_-^i(y)=\liminf_{t\to y-}\frac{\phi(y)-\phi(t)}{y-t}\]
\[\label{eq:D+}D_+^i(y)=\liminf_{t\to y+}\frac{\phi(y)-\phi(t)}{y-t}\]
\[D_-^s(y)=\limsup_{t\to y-}\frac{\phi(y)-\phi(t)}{y-t}\]
\[D_+^s(y)=\limsup_{t\to y+}\frac{\phi(y)-\phi(t)}{y-t}\]
respectively.

Suppose $\phi:[0,1]\to[0,1]$ is a Borel function such that $\alpha=\mathrm{ess}\sup_{[0,1]}\phi<1$. Then it is easy to show that the measure defined by $\mu=\phi^*m$ has support in $[0,\alpha]$. In fact
\[
\mu((\alpha,1])=\int_{(\alpha,1]}d(\phi^*m)
=\int_{\phi^{-1}((\alpha,1])}dm=m(\phi^{-1}(\alpha,1])=0.
\]
Hence in this case $i^2_\mu\in\sh_q$ by Theorem \ref{th:c_p embedding compact support}$(i)$. Therefore $C_\phi\in\sh_q$ by Lemma \ref{le:4parts}, so  from here onwards we assume that $\alpha=\mathrm{ess}\sup_{[0,1]}\phi=1$.

Since changing the values of $\phi$ on a set of measure zero does not effect $\mu=\phi^*m$, whenever $m(\mathfrak{M}_\phi)=0$, one may take $\phi\equiv 1$ on $\mathfrak{M}_\phi$. This will be assumed in the rest of the paper.
\begin{lem}\label{le:s-lemma}Suppose $\phi$ is a Borel function with $\mathfrak{M}_\phi=\{x_1,\ldots,x_k\}$ and $\mu=\phi^*m$. If for some $s\geq 1$ there exists an $\eps>0$ and a constant $c>0$ such that
\[
|x-x_i|\leq c|\phi(x)-1|^s \ \ \mathrm{whenever} \ \ |x-x_i|<\eps
\]
for all $i=1,\ldots,k$, then there exists a $\delta_0>0$ such that $\mu(J_\delta)\leq 2kc\delta^s$ whenever $0<\delta<\delta_0$.
\end{lem}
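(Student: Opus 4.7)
The plan is to unwind the definition $\mu(J_\delta) = m(\phi^{-1}(J_\delta))$, localize the preimage near the essential maxima using Lemma \ref{lem:V lemma}, and then apply the hypothesis on each localized piece to get the $\delta^s$ bound.

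First I would write
\[
\mu(J_\delta) = m\bigl(\phi^{-1}([1-\delta, 1])\bigr)
\]
by definition of the pull-back measure. Since $\mathfrak{M}_\phi = \{x_1,\ldots,x_k\}$ is finite, the neighborhood $V_\eps$ from Lemma \ref{lem:V lemma} is just the (intersection with $[0,1]$ of the) union $\bigcup_{i=1}^k (x_i - \eps, x_i + \eps)$. I would choose $\eps > 0$ to be the one given in the hypothesis. Then by Lemma \ref{lem:V lemma}$(iii)$ there is a $\delta_0 > 0$ such that for $0 < \delta < \delta_0$ we have
\[
\phi^{-1}([1-\delta, 1]) \subset V_\eps = \bigcup_{i=1}^k (x_i - \eps, x_i + \eps)
\]
up to a set of measure zero.

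Next I would bound the measure of the intersection of $\phi^{-1}([1-\delta,1])$ with each interval $(x_i - \eps, x_i + \eps)$ separately. If $x$ lies in this intersection then $|x - x_i| < \eps$ and $1 - \delta \leq \phi(x) \leq 1$, so $|\phi(x) - 1| \leq \delta$. The hypothesis then gives $|x - x_i| \leq c|\phi(x) - 1|^s \leq c\delta^s$. Hence
\[
\phi^{-1}([1-\delta, 1]) \cap (x_i - \eps, x_i + \eps) \subset [x_i - c\delta^s,\, x_i + c\delta^s],
\]
which has Lebesgue measure at most $2c\delta^s$. Summing over $i = 1, \ldots, k$ yields $\mu(J_\delta) \leq 2kc\delta^s$, as required.

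There is essentially no obstacle here beyond the bookkeeping: the only subtlety is making sure the choice of $\delta_0$ comes from Lemma \ref{lem:V lemma}$(iii)$ applied to the specific $\eps$ from the hypothesis, and that one may shrink $\delta_0$ further if needed so that $c\delta_0^s < \eps$ (ensuring the bound $|x - x_i| \leq c\delta^s$ is consistent with staying inside $(x_i - \eps, x_i + \eps)$, although this is automatic once $\phi^{-1}(J_\delta) \subset V_\eps$).
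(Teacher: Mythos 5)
Your proof is correct and follows essentially the same route as the paper: apply Lemma \ref{lem:V lemma}$(iii)$ to localize $\phi^{-1}(J_\delta)$ into the $\eps$-neighborhoods of the $x_i$, then use the hypothesis together with $|\phi(x)-1|\leq\delta$ to bound each piece by $2c\delta^s$ and sum over $i$. No gaps.
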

\begin{proof}By Lemma \ref{lem:V lemma}(iii), there exists a $\delta_0>0$ such that $\phi^{-1}(J_\delta)\subset V_{\eps}$ almost everywhere whenever $0<\delta<\delta_0$. Since $\sup_{\phi^{-1}(J_\delta)}|\phi(x)-1|\leq\delta$, we get
\begin{align*}
m(\phi^{-1}(J_\delta))&\leq\sum_{i=1}^km(\phi^{-1}(J_\delta)\cap\{\mathrm{dist}(x,x_i)<\eps\})
\leq2\sum_{i=1}^k\sup_{\phi^{-1}(J_\delta)\cap\{|x-x_i|<\eps\}}|x-x_i| \\
&\leq2\sum_{i=1}^k\sup_{\phi^{-1}(J_\delta)\cap\{|x-x_i|<\eps\}}c|\phi(x)-1|^s\leq 2kc\delta^s.
\end{align*}
Therefore we get
\[
\mu(J_\delta)=\int_{J_\delta}d\mu=\int_{J_\delta}d(\phi^\ast m)
=\int_{\phi^{-1}(J_\delta)}dm=m(\phi^{-1}(J_\delta))\leq 2kc\delta^s
\]
whenever $0<\delta<\delta_0$.
\end{proof}

We arrive at the main theorem that gives necessary conditions for composition operators on $M^2_\Lambda$ to be bounded, compact or in $\sh_q$.

\begin{thm}\label{thm:MainComposition} Let $\Lambda$ be lacunary and $\mathfrak{M}_\phi=\{x_1,\dots,x_k\}$.\\
$(i)$ If $D_-^i>0$ and $D_+^s<0$ on $\mathfrak{M}_\phi$, then $C_\phi:M^2_\Lambda\to L^2$ is bounded.\\
$(ii)$ If $D_-^i=+\infty$ and $D_+^s=-\infty$ on $\mathfrak{M}_\phi$, then $C_\phi:M^2_\Lambda\to L^2$ is compact.\\
$(iii)$ If for some $\eps>0$, $\beta>1$ and constant $c$ we have
\begin{equation}\label{eq:comp3}\abs{x-x_i}\leq c\abs{\phi(x)-1}^\beta  \ \ \forall \ \ |x-x_i|<\eps\end{equation}
for $i=1,\ldots,k$, then $C_\phi\in\mathcal{S}_q(M^2_{\Lambda},L^2)$ $\forall$ $q>0$.
\end{thm}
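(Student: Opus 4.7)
The plan is to use Lemma \ref{le:4parts} to reduce each of the three statements to a corresponding property of the embedding $i^2_\mu$ for $\mu=\phi^\ast m$. Since $\Lambda$ is lacunary (hence quasilacunary), Theorem \ref{co:SublinearBounded} and Theorem \ref{th:c_p embedding compact support}(ii) supply the three implications I need: sublinearity of $\mu$ gives boundedness of $i^2_\mu$, vanishing sublinearity gives compactness, and $\beta$-sublinearity for some $\beta>1$ places $i^2_\mu$ (and hence $C_\phi$) in $\mathcal{S}_q$ for every $q>0$. Throughout I use the convention, already in force, that $\phi(x_i)=1$ at each $x_i\in\mathfrak{M}_\phi$.

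For (i), I would unpack $D_-^i(x_i)>0$ and $D_+^s(x_i)<0$ via the definitions of liminf and limsup: for each $i$ there exist $c_i>0$ and $\eps_i>0$ such that
\[
1-\phi(t)\ge c_i(x_i-t)\quad\text{for }t\in(x_i-\eps_i,x_i),\qquad 1-\phi(t)\ge c_i(t-x_i)\quad\text{for }t\in(x_i,x_i+\eps_i).
\]
Setting $c=\min_i c_i>0$ and $\eps=\min_i\eps_i>0$, the combined inequality $|x-x_i|\le c^{-1}|1-\phi(x)|$ for $|x-x_i|<\eps$ is exactly the hypothesis of Lemma \ref{le:s-lemma} with $s=1$. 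The lemma then gives $\mu(J_\delta)\le (2k/c)\delta$ for small $\delta$, so $\mu$ is sublinear, and Theorem \ref{co:SublinearBounded}(i) combined with Lemma \ref{le:4parts}(i) yields boundedness of $C_\phi$.

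For (ii) I would reuse the same reduction, observing that the stronger hypothesis $D_-^i=+\infty$ and $D_+^s=-\infty$ lets me choose the constant above arbitrarily large: for every $C>0$ there is $\eps_C>0$ such that $|x-x_i|\le C^{-1}|1-\phi(x)|$ whenever $|x-x_i|<\eps_C$. Lemma \ref{le:s-lemma} then furnishes, for each $C$, a threshold $\delta_0(C)>0$ below which $\mu(J_\delta)/\delta\le 2k/C$. Since $C$ is arbitrary, $\mu(J_\delta)/\delta\to 0$ as $\delta\to 0$, i.e. $\mu$ is vanishing sublinear, and Theorem \ref{co:SublinearBounded}(ii) gives compactness of $C_\phi$.

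Part (iii) is the most direct: the hypothesis \eqref{eq:comp3} is literally the hypothesis of Lemma \ref{le:s-lemma} for $s=\beta>1$, producing $\mu(J_\delta)\le 2kc\delta^\beta$ on some $(0,\delta_0)$, which is the definition of $\beta$-sublinearity. Theorem \ref{th:c_p embedding compact support}(ii) and Lemma \ref{le:4parts}(iii) then complete the argument. The only genuinely delicate step, in parts (i)--(ii), is the passage from the one-sided Dini-derivative hypotheses to the symmetric local inequality required by Lemma \ref{le:s-lemma}; however, choosing a common $\eps$ and $c$ across both sides at each interior $x_i$ is routine, and at the boundary points $x_i\in\{0,1\}$ only one side is relevant so the argument simplifies automatically.
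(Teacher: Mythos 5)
Your proposal is correct and follows essentially the same route as the paper: each part reduces, via Lemma \ref{le:s-lemma}, to showing $\mu=\phi^\ast m$ is sublinear, vanishing sublinear, or $\beta$-sublinear, and then invokes Theorem \ref{co:SublinearBounded}, Theorem \ref{th:c_p embedding compact support}$(ii)$, and Lemma \ref{le:4parts}. Your explicit unpacking of the Dini-derivative hypotheses into the two one-sided inequalities is just a more detailed version of the step the paper states in one line.
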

\begin{proof}(i) The hypothesis about the derivatives implies that for some constant $M>0$ there exists an $\eps>0$ such that
\[\frac{|\phi(x)-1|}{|x-x_i|}\geq M>0\ \ \Longleftrightarrow \ |x-x_i|\leq M^{-1}|\phi(x)-1|\] whenever $|x-x_i|<\eps$ for all $i=1,\ldots,k$. Hence by Lemma \ref{le:s-lemma}, we get $\mu(J_\delta)\leq 2kM^{-1}\delta$ for $0<\delta<\delta_0$ .
Therefore $\mu$ is sublinear and $i_\mu^2$ is bounded by Theorem \ref{co:SublinearBounded} (i). So Lemma \ref{le:4parts} implies that $ C_\phi:M^2_\Lambda\to L^2$ is bounded.

(ii) By our hypothesis, for any $M>0$ there exists an $\eps>0$ such that
\[\frac{|\phi(x)-1|}{|x-x_i|}\geq M \ \ \Longleftrightarrow \ |x-x_i|\leq M^{-1}|\phi(x)-1|\]
whenever $|x-x_i|<\eps$. And for every such $\eps>0$ there exists a $\delta_0>0$ such that $\mu(J_\delta)\leq 2kM^{-1}\delta$ whenever $0<\delta<\delta_0$ by Lemma \ref{le:s-lemma}. Therefore $\frac{\mu(J_\delta)}{\delta}\to 0$ as $\delta\to 0$. So the measure $\mu$ defined above is a vanishing sublinear measure hence $i_\mu^2:M^2_\Lambda\to L^2(\mu)$ is compact by Theorem \ref{co:SublinearBounded} (ii), and so is $ C_\phi=J\circ i_\mu^2$.

(iii) Applying Lemma \ref{le:s-lemma} directly to condition \eqref{eq:comp3}, we get $\mu(J_\delta)\leq 2kc\delta^\beta$ whenever $0<\delta<\delta_0$. Hence by Theorem \ref{th:c_p embedding compact support}$(ii)$, $i_\mu\in
\mathcal{S}_q(M^2_{\Lambda},L^2(\mu))$ for all $q>0$. So
$C_\phi\in\mathcal{S}_q(M^2_{\Lambda},L^2)$ for all $q>0$.
\end{proof}

\begin{rem}If $\psi\in L^\infty$ then these results still hold true for the weighted composition operator $M_\psi\circ \,C_\phi$ where $M_\psi$ is the multiplication operator with symbol $\psi$, which is a bounded operator on $L^2$.
\end{rem}

\section{Composition Operators on $M^2_\Lambda$: Inverse results}We conclude by presenting some results that serve as converses to the boundedness and compactness theorems given above for composition operators on $M^2_\Lambda$. We shall need the following two lemmas.

\begin{lem}\label{le:liminf lemma}Let $\mu$ be a positive measure on $[0,1]$. Then the following hold:\\
$(i)$ If $i_\mu^2$ is bounded, then $\liminf_{\delta\to 0}\frac{\mu(J_\delta)}{\delta}<\infty$  \\
$(ii)$ If $\,i_\mu^2$ is compact, then $\liminf_{\delta\to 0}\frac{\mu(J_\delta)}{\delta}= 0$ .
\end{lem}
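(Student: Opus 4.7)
The plan is to test the embedding $i_\mu^2$ on the normalized M\"{u}ntz monomials $g_\lambda:=\sqrt{2\lambda+1}\,x^\lambda$ for $\lambda\in\Lambda$, which have unit norm in $M^2_\Lambda$ and concentrate near $x=1$. The basic one-sided estimate, obtained by restricting the $L^2(\mu)$-integral to $J_\delta$ and using $x^{2\lambda}\geq(1-\delta)^{2\lambda}$ there, is
\[
\norm{g_\lambda}_{L^2(\mu)}^2=(2\lambda+1)\int_0^1 x^{2\lambda}\,d\mu\geq (2\lambda+1)(1-\delta)^{2\lambda}\mu(J_\delta),\quad \delta\in(0,1).
\]
Choosing $\delta=1/\lambda$ makes the prefactor $(2\lambda+1)(1-1/\lambda)^{2\lambda}$ comparable to $2\lambda e^{-2}$ for large $\lambda$, so for some constant $c>0$ and all sufficiently large $\lambda\in\Lambda$,
\[
\norm{g_\lambda}_{L^2(\mu)}^2\geq c\,\frac{\mu(J_{1/\lambda})}{1/\lambda}.
\]

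For $(i)$, if $\norm{i_\mu^2}\leq C$ then $\norm{g_\lambda}_{L^2(\mu)}\leq C$ for every $\lambda\in\Lambda$, hence $\mu(J_{1/\lambda})/(1/\lambda)\leq C^2/c$ along the sequence $\delta_n=1/\lambda_n$. Since $\sum 1/\lambda_n<\infty$ forces $\lambda_n\to\infty$, this sequence tends to $0$ and so $\liminf_{\delta\to 0}\mu(J_\delta)/\delta\leq C^2/c<\infty$.

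For $(ii)$, I would first establish that $g_\lambda\rightharpoonup 0$ weakly in $M^2_\Lambda$ as $\lambda\to\infty$ along $\Lambda$: the inner products against the total family of monomials satisfy $\langle g_\lambda,x^{\lambda'}\rangle=\sqrt{2\lambda+1}/(\lambda+\lambda'+1)\to 0$, and the uniform bound $\norm{g_\lambda}_2=1$ extends this by density to all $h\in M^2_\Lambda$. Compactness of $i_\mu^2$ then promotes weak to norm convergence, $\norm{g_\lambda}_{L^2(\mu)}\to 0$, and combining with the lower bound above yields $\mu(J_{1/\lambda})/(1/\lambda)\to 0$, so $\liminf_{\delta\to 0}\mu(J_\delta)/\delta=0$.

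The only delicate step is the weak convergence $g_\lambda\rightharpoonup 0$; the remainder is an elementary concentration estimate paired with the calibrated choice $\delta=1/\lambda$. Note that no structural hypothesis on $\Lambda$ beyond $\sum 1/\lambda_k<\infty$ is used: this summability is what guarantees $\Lambda$ is unbounded, thereby making the test sequence $\delta_n=1/\lambda_n$ converge to zero.
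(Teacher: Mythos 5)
Your proposal is correct and follows essentially the same route as the paper: test the embedding on the (normalized) monomials $x^{\lambda_n}$, restrict the $L^2(\mu)$-integral to $J_{1/\lambda_n}$ using $(1-1/\lambda_n)^{2\lambda_n}\to e^{-2}$ to get the lower bound $\mu(J_{1/\lambda_n})/(1/\lambda_n)\lesssim\norm{g_{\lambda_n}}_{L^2(\mu)}^2$, and for part $(ii)$ upgrade weak convergence of the normalized monomials to norm convergence via compactness. The only difference is cosmetic (you normalize by $\sqrt{2\lambda+1}$ where the paper uses $\lambda_n^{1/2}$), and your closing remark that only $\lambda_n\to\infty$ is needed matches the paper, which likewise imposes no lacunarity here.
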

\begin{proof}(i) Suppose $\mu$ is $\Lambda_2$-embedding. Since $\lim_{n\to\infty}(1-\frac{1}{\lambda_n})^{\lambda_n}=\frac{1}{e}$, there exists an integer $N$ such that, for all $n\geq N$ and for all $x\in[1-\frac{1}{\lambda_n},1]$, we have $x^{\lambda_n}\geq\frac{1}{3}$. It follows that for all $n\geq N$
\[
\frac{1}{3^2}\mu(J_{1/\lambda_n})\leq\int_{J_{1/\lambda_n}}x^{2\lambda_n}d\mu\leq ||i_\mu^2||^2\int_0^1x^{2\lambda_n}dx=\frac{||i_\mu^2||^2}{2\lambda_n+1}.
\]
Therefore for all $n\geq N$, we have
\[
\mu(J_{1/\lambda_n})\leq\frac{3^2||i^2_\mu||^2}{\lambda_n}\ \Longleftrightarrow \ \frac{\mu(J_{1/\lambda_n})}{1/\lambda_n}\leq 9||i_\mu^2||^2.
\]
This implies that $\liminf_{\delta\to 0}\frac{\mu(J_\delta)}{\delta}<\infty$.

(ii) Choosing $f_n(x)=\lambda_n^{1/2}x^{\lambda_n}$, we see that
\[
\langle f_n\,,x^{\lambda_k}\rangle=\int_{[0,1]}\lambda^{1/2}_n x^{\lambda_n+\lambda_k}dx=\frac{\lambda_n^{1/2}}{\lambda_n+\lambda_k+1}\To 0
\]
as $n\to\infty$ for all $k\in\mathbb{N}$. Noting that $||f_n||_{L^2}$ is bounded and the linear span of the sequence $(x^{\lambda_k})_k$ is dense in $M^2_\Lambda$, it follows that $f_n\to 0$ weakly in $M^2_\Lambda$, as $n\to\infty$. If $i^2_\mu$ is compact, this implies that $(i^2_\mu\, f_n)_n$ converges strongly to $0$ in $L^2(\mu)$ and hence $||f_n||_{L^2(\mu)}\to 0$ as $n\to 0$. Therefore
\[
||f_n||^2_{L^2(\mu)}=\int_{[0,1]}\lambda_n x^{2\lambda_n}d\mu\geq\int_{J_{1/\lambda_n}}\lambda_n x^{2\lambda_n}d\mu\geq(1-\frac{1}{\lambda_n})^{2\lambda_n}\frac{\mu(J_{1/\lambda_n})}{1/\lambda_n}.
\]
Since $(1-\frac{1}{\lambda_n})^{2\lambda_n}\to e^{-2}$ as $n\to\infty$, we get
\[
\frac{\mu(J_{1/\lambda_n})}{1/\lambda_n}\To 0 \ \ \ \mathrm{as} \ \ \ n\to\infty
\]
and the result follows.
\end{proof}
The next lemma might be compared to Lemma \ref{le:s-lemma}.
\begin{lem}\label{le:eta lemma}Suppose $\phi:[0,1]\to[0,1]$ is a Borel function and $\mu=\phi^*m$. If for some $x_0\in[0,1]$ with $\phi(x_0)=1$ and $\eta>0$, there exists an $\eps>0$ such that
\[
x_0-x>\frac{1}{\eta}\,(1-\phi(x)) \ \ \ \mathrm{whenever}  \ \ 0<x_0-x<\eps,
\]
then $\mu(J_\delta)\geq\frac{\delta}{\eta}$ for $0<\delta<\eta\,\eps$.
\end{lem}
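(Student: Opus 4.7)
The plan is to translate the pointwise inequality near $x_0$ into a set inclusion for $\phi^{-1}(J_\delta)$, and then exploit the identity $\mu(J_\delta) = m(\phi^{-1}(J_\delta))$ coming from the definition of the pullback measure $\mu=\phi^\ast m$. The strategy produces an explicit subinterval of $\phi^{-1}(J_\delta)$ whose Lebesgue measure is exactly the required $\delta/\eta$.

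Concretely, I would fix $\delta\in(0,\eta\eps)$ and propose the interval $I_\delta=(x_0-\delta/\eta,\,x_0)$ as the witness. For every $x\in I_\delta$ we have $0<x_0-x<\delta/\eta<\eps$, so the standing hypothesis applies and yields
\[
1-\phi(x)<\eta(x_0-x)<\eta\cdot\frac{\delta}{\eta}=\delta,
\]
i.e.\ $\phi(x)>1-\delta$, which is exactly $\phi(x)\in J_\delta$. This gives the key inclusion $I_\delta\subset\phi^{-1}(J_\delta)$, and taking Lebesgue measure produces
\[
\mu(J_\delta)=m\bigl(\phi^{-1}(J_\delta)\bigr)\geq m(I_\delta)=\frac{\delta}{\eta},
\]
which is the claim.

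The only real point of vigilance is making sure $I_\delta$ actually sits inside $[0,1]$ so that the measure comparison is honest. The hypothesis is vacuous if $x_0=0$ (there are no $x\in[0,1]$ with $0<x_0-x$), so we may assume $x_0>0$, and we are free to shrink the given $\eps$ to $\min(\eps,x_0)$ without weakening the hypothesis; the restriction $\delta<\eta\eps$ then forces $x_0-\delta/\eta>x_0-\eps\geq 0$, so $I_\delta\subset(0,x_0]\subset[0,1]$. Apart from this boundary bookkeeping, the argument is a direct unwinding of the definition of $\phi^\ast m$ combined with the linear lower control of $1-\phi(x)$ by $x_0-x$ supplied by the hypothesis; no deeper tool is needed.
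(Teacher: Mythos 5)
Your proof is correct and follows essentially the same route as the paper: exhibit the interval $(x_0-\delta/\eta,\,x_0)$ inside $\phi^{-1}(J_\delta)$ and use $\mu(J_\delta)=m(\phi^{-1}(J_\delta))$. The only difference is your boundary check that the interval lies in $[0,1]$, a point the paper's proof silently skips (and which is harmless in all uses of the lemma, where only small $\delta$ matter).
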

\begin{proof}Since there exists an $\eps>0$ such that for $0<x_0-x<\eps$, we have
\[
\frac{1-\phi(x)}{x_0-x}<\eta \ \ \Longleftrightarrow \ \ 1-\phi(x)<\eta(x_0-x).
\]
Then suppose $0<\delta<\delta_0=\eta\,\eps$. If $0<x_0-x<\frac{\delta}{\delta_0}\,\eps=\frac{\delta}{\eta}$ then
$1-\phi(x)<\eta\frac{\delta}{\eta}=\delta$ which implies $\phi(x)>1-\delta$. So $\phi^{-1}(J_\delta)$ contains the interval $(x_0-\frac{\delta}{\eta},x_0)$ of Lebesgue measure $\frac{\delta}{\eta}$. Therefore
$$
m(\phi^{-1}(J_\delta))\geq\frac{\delta}{\eta}\  \Rightarrow \ \mu(J_\delta)\geq\frac{\delta}{\eta} \
\Rightarrow \ \frac{\mu(J_\delta)}{\delta}\geq\frac{1}{\eta}.\eqno\qedhere
$$
\end{proof}

For the partial converses to parts (i) and (ii) of Theorem \ref{thm:MainComposition}, we need neither lacunarity nor any assumption on $\mathfrak{M}_\phi$:

\begin{thm}\label{th:Converses}Suppose $\phi:[0,1]\to[0,1]$ is a Borel function, and $\phi(x_0)=1$ for some $x_0\in[0,1]$. \\
$(i)$ If  $C_\phi$ is bounded, then $D_-^s(x_0)>0$ and $D_+^i(x_0)<0$. \\
$(ii)$ If $C_\phi$ is compact, then $D_-^s(x_0)=+\infty$ and $D_+^i(x_0)=-\infty$.
\end{thm}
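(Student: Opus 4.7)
The plan is to argue by contradiction in each case, converting the Dini-derivative hypotheses at $x_0$ into estimates on $\mu(J_\delta)/\delta$ for $\mu=\phi^\ast m$, and then invoking Lemma \ref{le:liminf lemma} via Lemma \ref{le:4parts}. Since $C_\phi$ is bounded (resp.\ compact) iff $i_\mu^2$ is, Lemma \ref{le:liminf lemma} forces $\liminf_{\delta\to 0}\mu(J_\delta)/\delta<\infty$ under hypothesis (i) and $=0$ under hypothesis (ii). The idea is then to use Lemma \ref{le:eta lemma} (and its obvious right-sided analogue, obtained by replacing the interval $(x_0-\delta/\eta,\,x_0)$ by $(x_0,\,x_0+\delta/\eta)$ in the proof) to manufacture a lower bound on $\mu(J_\delta)/\delta$ that contradicts the appropriate conclusion of Lemma \ref{le:liminf lemma}.

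For (i), suppose toward contradiction $D_-^s(x_0)=0$. The bound $\phi\leq 1$ forces the difference quotients $\frac{1-\phi(t)}{x_0-t}$ to be nonnegative on the left of $x_0$, so the vanishing of their limsup forces the quotients themselves to tend to $0$. Hence for every $\eta>0$ there exists $\eps>0$ with $x_0-x>\frac{1}{\eta}(1-\phi(x))$ on $(x_0-\eps,x_0)$; Lemma \ref{le:eta lemma} then yields $\mu(J_\delta)/\delta\geq 1/\eta$ for small $\delta$, and letting $\eta\to 0^+$ produces $\liminf_{\delta\to 0}\mu(J_\delta)/\delta=+\infty$, contradicting Lemma \ref{le:liminf lemma}(i). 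The assumption $D_+^i(x_0)=0$ is dispatched identically after rewriting $D_+^i(x_0)=-\limsup_{s\to 0^+}\frac{1-\phi(x_0+s)}{s}$ and invoking the right-sided analogue of Lemma \ref{le:eta lemma}.

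For (ii), I would assume $D_-^s(x_0)=M<+\infty$, fix any $\eta>M$, and use the definition of limsup to secure $\frac{1-\phi(x)}{x_0-x}<\eta$ on some left neighborhood of $x_0$; Lemma \ref{le:eta lemma} gives $\mu(J_\delta)/\delta\geq 1/\eta>0$ for small $\delta$, contradicting Lemma \ref{le:liminf lemma}(ii). The hypothesis $D_+^i(x_0)>-\infty$ is excluded symmetrically. Nothing here is genuinely hard; the only point requiring care is to correctly translate each one-sided Dini derivative---a limsup of nonnegative quotients for $D_-^s$, a liminf of nonpositive quotients for $D_+^i$---into the pointwise inequality demanded by Lemma \ref{le:eta lemma}, and to notice that the proof of that lemma transfers verbatim to the right of $x_0$ since it only uses that $\phi^{-1}(J_\delta)$ contains the half-interval produced by the hypothesis.
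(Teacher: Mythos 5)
Your proposal is correct and follows essentially the same route as the paper: argue by contradiction, translate the Dini-derivative hypothesis at $x_0$ into the inequality required by Lemma \ref{le:eta lemma} (or its right-sided analogue), and derive a lower bound on $\mu(J_\delta)/\delta$ that contradicts Lemma \ref{le:liminf lemma} via Lemma \ref{le:4parts}. Your explicit remarks that nonnegativity of the difference quotients upgrades the vanishing limsup to a genuine limit, and that Lemma \ref{le:eta lemma} transfers verbatim to the right of $x_0$, are points the paper leaves implicit.
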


\begin{proof}(i) Suppose on the contrary that either $D_-^s(x_0)=0$ or $D_+^i(x_0)=0$. We shall deduce a contradiction for one of these cases since both are analogous. So suppose $D_-^s(x_0)=0$, that is
\[
\lim_{x\to x_0-}\frac{1-\phi(x)}{x_0-x}=0.
\]
For each $\eta>0$ there exists an $\eps>0$ such that for $0<x_0-x<\eps$, we have
\[
\frac{1-\phi(x)}{x_0-x}<\eta.
\]
Therefore by Lemma \ref{le:eta lemma} we get $\frac{\mu(J_\delta)}{\delta}\geq\frac{1}{\eta}$ whenever $\delta<\eta\,\eps$. So $\frac{\mu(J_\delta)}{\delta}\to +\infty$ as $\delta\to 0$. Since $C_\phi$ is bounded we get that $\mu$ is $\Lambda_2$-embedding by Lemma \ref{le:4parts}. This leads to a contradiction since Lemma \ref{le:liminf lemma} gives $\liminf_{\delta\to 0}\frac{\mu(J_\delta)}{\delta}<\infty$.

For (ii), suppose to the contrary that either $D_-^s(x_0)<+\infty$ or $D_+^i(x_0)>-\infty$ for some $x_0\in\mathfrak{M}_\phi$. Again due to similarities we shall deal with one case. So suppose $D_-^s(x_0)<\infty$, that is
\[
\lim_{x\to x_0-}\frac{1-\phi(x)}{x_0-x}<\infty.
\]
So there exists a $\zeta>0$ and an $\eps>0$ such that for $0<x_0-x<\eps$, we have
\[
\frac{1-\phi(x)}{x_0-x}<\zeta.
\]
Therefore by Lemma \ref{le:eta lemma} we get $\frac{\mu(J_\delta)}{\delta}\geq\frac{1}{\zeta}$ for $\delta<\zeta\,\eps$. This contradicts Lemma \ref{le:liminf lemma} because $i_\mu^2$ is compact by Lemma \ref{le:4parts}.
\end{proof}

\begin{cor}Suppose $\phi$ is a polynomial with $\phi^{-1}(1)$ non-empty. Then $C_\phi$ is not compact, and if it is bounded then $\phi^{-1}(1)\subset\{0,1\}$.
\end{cor}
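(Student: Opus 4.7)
The plan is to invoke Theorem \ref{th:Converses} directly, together with the elementary observation that a polynomial $\phi$ has a finite ordinary derivative at every point of $[0,1]$, so that all four Dini derivatives $D_\pm^{s/i}(x_0)$ coincide with $\phi'(x_0)\in\Real$ whenever they are defined. In particular, none of them can ever equal $\pm\infty$, and at an interior point where $\phi$ attains the value~$1$ the derivative must vanish.

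For the non-compactness statement, fix any $x_0\in\phi^{-1}(1)$. Theorem \ref{th:Converses}(ii) would force, if $C_\phi$ were compact, that $D_-^s(x_0)=+\infty$ (meaningful when $x_0>0$) and $D_+^i(x_0)=-\infty$ (meaningful when $x_0<1$). Whether $x_0$ lies in the interior of $[0,1]$ or at an endpoint, at least one of these applicable conditions requires an infinite one-sided derivative, which is impossible for a polynomial. This immediately contradicts compactness.

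For the inclusion $\phi^{-1}(1)\subset\{0,1\}$ under the boundedness hypothesis, I argue the contrapositive. Suppose $x_0\in\phi^{-1}(1)\cap(0,1)$. Since $\phi$ takes values in $[0,1]$ and $\phi(x_0)=1$, the point $x_0$ is an interior maximum of the differentiable function $\phi$, so $\phi'(x_0)=0$ and hence $D_-^s(x_0)=0$. This contradicts the necessary condition $D_-^s(x_0)>0$ from Theorem \ref{th:Converses}(i). Therefore $C_\phi$ cannot be bounded, and taking the contrapositive yields $\phi^{-1}(1)\subset\{0,1\}$ whenever $C_\phi$ is bounded.

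The only (minor) obstacle is the bookkeeping at the boundary points $x_0\in\{0,1\}$, where merely one of the two one-sided derivatives appearing in Theorem \ref{th:Converses} is defined; this causes no trouble because the proof of that theorem establishes each necessary derivative condition independently, and a polynomial violates every one of them.
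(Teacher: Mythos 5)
Your proposal is correct and follows essentially the same route as the paper: for boundedness, an interior point of $\phi^{-1}(1)$ is a local maximum of the differentiable $\phi$, so $\phi'(x_0)=0$ contradicts Theorem \ref{th:Converses}(i); for compactness, the finiteness of the polynomial's one-sided derivatives contradicts Theorem \ref{th:Converses}(ii). Your extra care about which one-sided condition applies at the endpoints $0$ and $1$ is a reasonable refinement of the paper's brisker argument, but it does not change the substance.
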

\begin{proof}If $C_\phi$ is bounded and some $x_0\in\phi^{-1}(1)$ is an interior point of $[0,1]$, then clearly $x_0$ must be a local maximum and hence $\phi'(x_0)=0$. This contradicts Theorem \ref{th:Converses} (i) and hence $\phi^{-1}(1)\subset\{0,1\}$. Similarly, by part (ii) of the theorem we get the conclusion that $C_\phi$ can never be compact because $\phi$ is differentiable everywhere.
\end{proof}

\textbf{Acknowledgments:} The author wishes to thank his supervisor Professor Dan Timotin for the many interesting ideas he shared and discussions we had.

\bibliographystyle{amsplain}

\begin{thebibliography}{00}

\bibitem{Alam08} I.Al Alam. \emph{G\'{e}ometrie des espaces de
    M\"{u}ntz et op\'{e}rateurs de composition $\grave{a}$
    poids.} PhD thesis. Universit\'{e} Lille 1, 2008.

\bibitem{Alam09} I.Al Alam. \emph{Essential norms of weighted
    composition operators on M\"{u}ntz spaces}. J. Math. Anal.
    Appl., 358(2), 2009.



\bibitem{Bor95} P.Borwein and T.Erdelyi. \emph{Polynomials and
    polynomial inequalities}, volume 161 of Graduate Texts in
    Mathematics. Springer-Verlag, New York, 1995.



\bibitem{Dan10} I.Chalendar, E.Fricain, and Dan Timotin.
\emph{Embedding Theorems for M\"{u}ntz Spaces}, Annales de l'Institut Fourier, To Appear.\\
 http://math.univ-lyon1.fr/~fricain/arxiv-muntz.pdf

\bibitem{Gur05} V.I.Gurariy and W.Lusky. \emph{Geometry of
    M\"{u}ntz spaces and related questions}, volume 1870 of
    Lecture Notes in Mathematics. Springer-Verlag, Berlin, 2005

\bibitem{Noor2011} S. W. Noor and D. Timotin. \emph{Embeddings of M\"{u}ntz Spaces: The Hilbertian Case},
    Proc. Amer. Math. Soc., To appear 2012. http://arxiv.org/abs/1110.5422

\bibitem{Sch87} A. Schinzel. \emph{On the number of terms of a power of a polynomial}, Acta Arith. 49 (1987), no. 1, 55–70.

\bibitem{sp08}
A.~Spalsbury.
Perturbations in {M}\"untz's theorem.
 {\em J. Approx. Theory}, \textbf{150}(1):48--68, 2008.



\end{thebibliography}

\end{document}